\newtheorem{Theorem}{Theorem}[section]
\newtheorem{Lemma}[Theorem]{Lemma}
\newtheorem{Corollary}[Theorem]{Corollary}
\newtheorem{Remark}[Theorem]{Remark}
\numberwithin{equation}{section} \allowdisplaybreaks
\renewcommand\abstract{{\bf Abstract}}
\begin{document}
\title{Critical points of solutions to a kind of linear elliptic equations in multiply connected domains\footnote{\footnotesize The work is supported by The work is supported by National Natural Science Foundation of China (No.11401310) and Postgraduate Research \& Practice Innovation Program of Jiangsu Province (KYCX17\_0321). The first author is fully supported by China Scholarship Council(CSC) for visiting Rutgers University (201806840122).}}

\author{Haiyun Deng$^{1}$\footnote{E-mail: haiyundengmath1989@163.com(H. Deng), hrliu@njfu.edu.cn(H. Liu), xpyang@nju.edu.cn(X. Yang)}, Hairong Liu$^{2}$, Xiaoping Yang$^{3}$\\[12pt]
\small \emph {$^{1}$School of Science, Nanjing University of Science and Technology, Nanjing, Jiangsu, 210094, China;}\\
\small \emph {$^{2}$School of Science, Nanjing Forestry University, Nanjing, Jiangsu, 210037, China;}\\
\small \emph {$^{3}$Department of Mathematics, Nanjing University, Nanjing, Jiangsu, 210093, China}}
\date{}
\maketitle

\renewcommand{\labelenumi}{[\arabic{enumi}]}

\begin{abstract}{\bf:} \footnotesize
 In this paper, we mainly study the critical points and critical zero points of solutions $u$ to a kind of linear elliptic equations with nonhomogeneous Dirichlet boundary conditions in a multiply connected domain $\Omega$ in $\mathbb{R}^2$. Based on the fine analysis about the distributions of connected components of the super-level sets $\{x\in \Omega: u(x)>t\}$ and sub-level sets $\{x\in \Omega: u(x)<t\}$ for some $t$, we obtain the geometric structure of interior critical point sets of $u$. Precisely, let $\Omega$ be a multiply connected domain with the interior boundary $\gamma_I$ and the external boundary $\gamma_E$, where $u|_{\gamma_I}=\psi_1(x),~u|_{\gamma_E}=\psi_2(x)$. When $\psi_1(x)$ and $\psi_2(x)$ have $N_1$ and $N_2$ local maximal points on $\gamma_I$ and $\gamma_E$ respectively, we deduce that $\sum_{i = 1}^k {{m_i}}\leq N_1+ N_2$, where $m_1,\cdots,m_k$ are the respective multiplicities of interior critical points $x_1,\cdots,x_k$ of $u$. In addition, when $\min_{\gamma_E}\psi_2(x)\geq \max_{\gamma_I}\psi_1(x)$ and $u$ has only $N_1$ and $N_2$ equal local maxima relative to $\overline{\Omega}$ on $\gamma_I$ and $\gamma_E$ respectively, we develop a new method to show that one of the following three results holds $\sum_{i = 1}^k {{m_i}}=N_1+N_2$ or $\sum_{i = 1}^k {{m_i}}+1=N_1+N_2$ or $\sum_{i = 1}^k {{m_i}}+2=N_1+N_2$. Moreover, we investigate the geometric structure of interior critical zero points of $u.$ We obtain that the sum of multiplicities of the interior critical zero points of $u$ is less than or equal to the half of the number of its isolated zero points on the boundaries.

\end{abstract}

{\bf Key Words:} critical point, critical zero point, multiplicity, level sets.

{{\bf 2010 Mathematics Subject Classification.} 35J25; 35B38.}

\section{Introduction and main results}
~~~~~In this paper we mainly investigate the interior critical points and critical zero points of solutions to the following elliptic equations
\begin{equation}\label{1.1}\begin{array}{l}
\begin{array}{l}
Lu:=\sum\limits_{i,j=1}^{2}a_{ij}(x)u_{x_ix_j}(x)+\sum\limits_{i=1}^{2}b_{i}(x)u_{x_i}(x)=0~~~\mbox{in}~\Omega,
\end{array}
\end{array}\end{equation}
where $\Omega$ is a bounded, smooth and multiply connected domain in $\mathbb{R}^{2}$, $a_{ij},b_{i}$ are smooth and $L$ is uniformly elliptic in $\Omega$.

 Critical points of solutions to elliptic equations is a significant research topic. There are many known results about the critical points. In 1987 Alessandrini \cite{Alessandrini} investigated the geometric structure of the critical point sets of solutions to linear elliptic equations with nonhomogeneous Dirichlet boundary condition in a planar bounded simply connected domain. In 1994, Sakaguchi \cite{Sakaguchi2} considered the critical points of solutions to an obstacle problem in a planar bounded smooth simply connected domain. He showed that if the number of critical points of the obstacle is finite and the obstacle has only $N$ local (global) maximum points, then $\Sigma_{i = 1}^k {{m_i}}+1 \le N$ ($\Sigma_{i = 1}^k {{m_i}}+1=N$), where $m_1,m_2,\cdots,m_k$ are the respective multiplicities of the critical points $x_1,x_2,\cdots,x_k$ of $u$ in the noncoincidence set. In 2012 Arango and G\'{o}mez \cite{Arango} studied the critical points of the solutions to a quasilinear elliptic equation with Dirichlet boundary condition in planar strictly convex and nonconvex domains respectively. In 2018, Deng, Liu and Tian \cite{Deng3} investigated the geometric structure of interior critical point sets of solutions $u$ to a quasilinear elliptic equation with nonhomogeneous Dirichlet boundary condition in a simply connected or multiply connected domain $\Omega$ in $\mathbb{R}^2$, and proved that $\Sigma_{i = 1}^k {{m_i}}+1=N$ or $\Sigma_{i = 1}^k {{m_i}}=N,$ where $m_1,\cdots,m_k$ are the respective multiplicities of interior critical points $x_1,\cdots,x_k$ of $u$ and $N$ is the number of global maximal points of $u$ on $\partial\Omega$. For the related research work of critical points, see \cite{AlessandriniMagnanini1,Alessandrini1,AlessandriniMagnanini2,Cecchini,ChenHuang,Deng2,Enciso1,Enciso2,Kraus,Sakaguchi1}.

The geometric structure of critical point sets of solutions to elliptic equations in higher dimensional spaces has been studied by many people. Under the assumption of the existence of a semi-stable solution of Poisson equation $-\triangle u=f(u)$, Cabr\'{e} and Chanillo \cite{CabreChanillo} showed that the solution $u$ has exactly one nondegenerate critical point in bounded smooth convex domains of $\mathbb{R}^{n}(n\geq2)$. In 1998, Han, Hardt and Lin \cite{Han} concerned with the geometric measure of singular sets of weak solutions of second-order elliptic equations. They gave an estimate on the measure of singular sets in terms of the frequency of solutions. In 2015, Cheeger, Naber and Valtorta \cite{Cheeger} introduced some techniques for giving measure estimates of the critical sets, including the Minkowski type estimates on the effective critical set $C_r(u)$. In 2017, Naber and Valtorta \cite{Naber} introduced some techniques for estimating the critical sets and singular sets which avoid the need of any $\varepsilon$-regularity lemmas. In 2017 Alberti, Bal and Di Cristo \cite{Alberti} studied the existence of critical points for solutions to second-order elliptic equations of the form $\nabla\cdot\sigma(x)\nabla u=0$ in a bounded domain with prescribed boundary conditions in $\mathbb{R}^{n}(n\geq3).$ See \cite{Deng1,Donnelly1,Donnelly2,Hardt,Lin,Logunov1,Logunov2,Logunov3,Tian1,Tian2} for some related work.

The goal of this paper is further to study the interior critical points and critical zero points of solutions to a kind of linear elliptic equations with nonhomogeneous Dirichlet boundary conditions in a multiply connected domain. Our main results are as follows.

\begin{Theorem}\label{th1.1} 
 Let $\Omega$ be a bounded, smooth and multiply connected domain with one interior boundary $\gamma_I$ and the external boundary $\gamma_E$ in $\mathbb{R}^{2}$ and $\psi_1(x),\psi_2(x)\in C^1(\overline{\Omega}).$ Suppose that $\psi_1(x)$ and $\psi_2(x)$ have $N_1$ local maximal points and $N_2$ local maximal points on $\gamma_I$ and $\gamma_E$ respectively. Let $u$ be a non-constant solution of the following boundary value problem
\begin{equation}\label{1.2}\begin{array}{l}
\left\{
\begin{array}{l}
\sum\limits_{i,j=1}^{2}a_{ij}(x)u_{x_ix_j}(x)+\sum\limits_{i=1}^{2}b_{i}(x)u_{x_i}(x)=0~~~\rm{in}~\Omega,\\
u|_{\gamma_I}=\psi_1(x),u|_{\gamma_E}=\psi_2(x).
\end{array}
\right.
\end{array}\end{equation}
 Then $u$ has finite interior critical points, denoting by $x_1,x_2,\cdots,x_k$, and the following inequality holds
\begin{equation}\label{1.3}\begin{array}{l}
\begin{array}{l}
\sum\limits_{i = 1}^k {{m_i}}\le N_1+N_2,
\end{array}
\end{array}\end{equation}
where $m_1,m_2,\cdots,m_k$ are the multiplicities of critical points $x_1,x_2,\cdots,x_k$ respectively.
\end{Theorem}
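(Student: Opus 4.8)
The plan is to combine the classical local structure theory of critical points of solutions to second-order elliptic equations in the plane with a global counting argument based on level sets. First I would recall the local result (due to Hartman–Wintner, used by Alessandrini and Sakaguchi): near any interior point $x_0$, a non-constant solution $u$ of $Lu=0$ behaves like a harmonic polynomial of some degree $n_0+1$ plus higher-order terms, so $\nabla u$ has an isolated zero at $x_0$ of finite multiplicity $m_0=n_0$, and the level set $\{u=u(x_0)\}$ near $x_0$ consists of $2(m_0+1)$ arcs emanating from $x_0$, dividing a small disc into $2(m_0+1)$ sectors on which $u-u(x_0)$ alternates in sign. Since $\overline\Omega$ is compact and these zeros are isolated, there are only finitely many, which gives the first assertion $x_1,\dots,x_k$ finite. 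The same local analysis applies on the boundary curves $\gamma_I,\gamma_E$, where $C^1$-ness of $\psi_1,\psi_2$ together with the boundary point lemma controls the structure.

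**The counting argument.**

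The core is an Euler-characteristic / combinatorial count on the super-level sets $\Omega_t=\{x\in\Omega:u(x)>t\}$ and sub-level sets $\{u<t\}$. For a generic regular value $t$ (avoiding the finitely many critical values $u(x_i)$ and the critical values of $\psi_1,\psi_2$), $\Omega_t$ is a smooth open set. I would track how the number of connected components of $\Omega_t$ and of its complement change as $t$ decreases from $\max u$ to $\min u$: a new component of $\{u>t\}$ is born exactly when $t$ drops below a local maximum value of $u$ on $\overline\Omega$, which (by the maximum principle, since $Lu=0$ forces all interior maxima to be non-strict and hence there are no interior local maxima) can only happen at a boundary local maximum — contributing at most $N_1+N_2$ births total. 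Components merge or change topology precisely at interior critical values, and each interior critical point $x_i$ of multiplicity $m_i$ forces, via the $2(m_i+1)$-sector picture, a net change that "absorbs" $m_i$ units of the component/handle budget. Assembling these changes into a single inequality — essentially that the total merging capacity $\sum m_i$ cannot exceed the total number of births $N_1+N_2$ — yields \eqref{1.3}. One must be careful with the doubly connected topology of $\Omega$: the hole bounded by $\gamma_I$ can cause a super-level set to be an annulus, so the bookkeeping should be done with the first Betti number (or equivalently by a Morse-theoretic relation $\#\{\text{maxima}\} - \#\{\text{saddles, counted with multiplicity}\} + \#\{\text{minima}\} = \chi$), handling $\gamma_I$ and $\gamma_E$ symmetrically.

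**Main obstacle.**

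The hard part will be the topological bookkeeping in the multiply connected setting: unlike the simply connected case treated by Alessandrini and Sakaguchi, here a level line can be a closed curve encircling the hole, and a super-level component can be an annulus or can touch both $\gamma_I$ and $\gamma_E$, so the naive "each birth is a boundary maximum, each merge is a saddle" accounting must be upgraded to track genus/connectivity changes carefully. I expect the cleanest route is to fix a regular value $t^*$ slightly below $\min(\min_{\gamma_I}\psi_1,\min_{\gamma_E}\psi_2)$ where $\Omega_{t^*}=\Omega$ (or close to it), and then run the count on the "relative" level sets, using that each interior saddle of multiplicity $m_i$ contributes $m_i$ to a suitable alternating sum while the only positive contributions come from the $N_1+N_2$ boundary maxima; the degenerate (non-Morse) critical points are handled by the Hartman–Wintner normal form, which replaces a multiplicity-$m_i$ point by what would be $m_i$ ordinary saddles. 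Verifying that no extra positive terms are hidden in the interaction between the two boundary curves — i.e., that the interior-boundary hole cannot manufacture a spurious "maximum" — is the delicate point, and is presumably where the fine analysis of the distribution of connected components announced in the abstract is needed.
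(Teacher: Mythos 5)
Your high-level strategy --- Hartman--Wintner local structure at a multiplicity-$m_i$ critical point, the strong maximum principle to exclude interior extrema, and a count of connected components of super/sub-level sets against local extrema of the boundary data --- is exactly the family of argument the paper uses. However, two concrete points in your plan are gaps rather than deferrable details. First, finiteness of the critical set does not follow from ``isolated zeros plus compactness of $\overline\Omega$'': the critical points are isolated only in the open set $\Omega$ and could a priori accumulate on $\gamma_I$ or $\gamma_E$. The paper's Lemma 2.3 instead argues that infinitely many interior critical points would, via the clustering components and the strong maximum principle, force infinitely many local extrema of $\psi_1,\psi_2$ on the boundary, contradicting the hypothesis; some argument of this type (tying the critical set to the boundary data, not just to compactness) is needed.

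Second, your sweep accounting asserts that components of $\{u>t\}$ are born only at boundary local maxima and ``merge or change topology precisely at interior critical values.'' The second half is false: two components of $\{u>t\}$ whose closures abut a local minimum point $p$ of the boundary data merge as soon as $t$ drops below $\psi(p)$, with no interior critical point involved, so boundary minima also consume your budget and the naive identity ``births $-$ merges $=$ final count'' does not by itself yield $\sum m_i\le N_1+N_2$. Moreover, in the doubly connected domain the set $\{u>t\}$ is a single annulus for $t$ in the gap $[Z_1,z_2]$ (when $Z_1\le z_2$ there are no critical values there, which itself requires proof --- the paper's Lemma 2.4), so the contributions of $\gamma_E$ and $\gamma_I$ must be separated: the paper runs the count on super-level sets for critical values in $(z_2,Z_2)$ against the $N_2$ extrema of $\psi_2$, and on sub-level sets for critical values in $(z_1,Z_1)$ against the $N_1$ extrema of $\psi_1$, with an extra case analysis (Lemma 2.7) for the overlap regime $z_2<t<Z_1$ when the orderings interleave. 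You correctly identify this bookkeeping as the main obstacle, but it is precisely the content of the theorem --- the paper resolves it by a double induction (over critical points sharing one critical value, tracking the number $q$ of connected clusters they form, and then over the distinct critical values via the families $G_{j_n}$, $F_{j_n}$) --- and the proposal as written does not supply it.
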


\begin{Theorem}\label{th1.2} 
  Suppose that the domain $\Omega$ satisfies the hypothesis of Theorem \ref{th1.1} and $\min_{\gamma_E} \psi_2(x)\geq \max_{\gamma_I} \psi_1(x).$ Let $u$ be a non-constant solution of (\ref{1.2}). In addition, suppose that $u$ has only $N_1$ equal local maxima and $N_1$ equal local minima relative to $\overline{\Omega}$ on $\gamma_I$ and that $u$ has only $N_2$ equal local maxima and $N_2$ equal local minima relative to $\overline{\Omega}$ on $\gamma_E$, i.e., the values of all local maximal (minimum) points on the corresponding boundary are equal. Then $u$ has finite interior critical points, and one of the following three results holds
\begin{equation}\label{1.4}\begin{array}{l}
\begin{array}{l}
\sum\limits_{i = 1}^k {{m_i}}= N_1+N_2,
\end{array}
\end{array}\end{equation}
or
\begin{equation}\label{1.5}\begin{array}{l}
\begin{array}{l}
\sum\limits_{i = 1}^k {{m_i}}+1= N_1+N_2,
\end{array}
\end{array}\end{equation}
or
\begin{equation}\label{1.6}\begin{array}{l}
\begin{array}{l}
\sum\limits_{i = 1}^k {{m_i}}+2= N_1+N_2,
\end{array}
\end{array}\end{equation}
where $m_i$ is as in Theorem \ref{th1.1}.
\end{Theorem}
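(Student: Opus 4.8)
Set $a_1=\max_{\gamma_I}\psi_1$, $b_1=\min_{\gamma_I}\psi_1$, $a_2=\max_{\gamma_E}\psi_2$, $b_2=\min_{\gamma_E}\psi_2$, so that $b_1\le a_1\le b_2\le a_2$ by hypothesis and, by the strong maximum principle, $b_1=\min_{\overline{\Omega}}u$, $a_2=\max_{\overline{\Omega}}u$ and $b_1<u<a_2$ in $\Omega$. Theorem \ref{th1.1} already gives the finiteness of $x_1,\dots,x_k$ and the bound $\sum_{i=1}^{k}m_i\le N_1+N_2$, so it suffices to prove the reverse estimate $\sum_{i=1}^{k}m_i\ge N_1+N_2-2$; combined with the upper bound this forces exactly one of (\ref{1.4}), (\ref{1.5}), (\ref{1.6}). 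The tools are those already in play: (i) near an interior critical point $x_i$, $\{u=u(x_i)\}$ consists of $2(m_i+1)$ analytic arcs through $x_i$ dividing a disc into $2(m_i+1)$ sectors on which $u-u(x_i)$ alternates sign; (ii) $u$ has no interior local extremum and the closure of every connected component of $\{u>t\}$ or $\{u<t\}$ meets $\partial\Omega$; (iii) the hypothesis means that the $N_1$ maxima and $N_1$ minima of $\psi_1$ on $\gamma_I$ are exactly the points where $u$ has a local maximum, resp.\ minimum, relative to $\overline{\Omega}$, so by the Hopf lemma $\nabla u\neq0$ there and points strictly out of $\Omega$ at the maxima and into $\Omega$ at the minima, while $u\equiv a_1$ at those maxima, $u\equiv b_1$ at those minima; likewise on $\gamma_E$ with $a_2,b_2$.

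\textbf{Step 1: localizing the critical values.} Assume first $a_1<b_2$. For a regular value $t\in(a_1,b_2)$ one has $\gamma_I\subset\{u<t\}$, $\gamma_E\subset\{u>t\}$, and by (ii) the set $\{u<t\}$ is connected and contains $\gamma_I$ while $\{u>t\}$ is connected and contains $\gamma_E$; moreover $\{u=t\}$ has no null-homotopic component (it would bound a disc on which $u$ attains an interior extremum) and precisely one component homotopic to the core of the annulus, so $\{u\le t\}$ is a topological annulus, of Euler characteristic $0$. Hence $u$ has no interior critical point with value in $(a_1,b_2)$, and setting $\mu_1=\sum_{u(x_i)\le a_1}m_i$, $\mu_2=\sum_{u(x_i)\ge b_2}m_i$ we have $\sum_i m_i=\mu_1+\mu_2$.

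\textbf{Step 2: counting near $\gamma_I$ and $\gamma_E$.} By the symmetry $u\mapsto-u$, $\gamma_I\leftrightarrow\gamma_E$ it is enough to show $\mu_1\ge N_1-1$. For $t$ slightly above $b_1$ the $N_1$ equal minima make $\{u<t\}$ a disjoint union of $N_1$ topological discs, whereas for $t\in(a_1,b_2)$ it is the connected annulus of Step 1; as $t$ increases these $N_1$ components must be merged into one. The topological type of $\{u<t\}$ changes only when $t$ passes a critical value of $u$ or of $\psi_1$; a critical value of $\psi_1$ below $a_1$ is inert, the level $a_1$ merely fills $N_1$ half-disc caps along arcs and merges nothing, and a passage through an interior critical point of multiplicity $m$ reduces the number of components by at most $m$ (from (i): the $m+1$ sectors where $u<u(x_i)$ near the point reconnect into one). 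Therefore the interior critical points with $u(x_i)\le a_1$ have total multiplicity at least $N_1-1$, i.e.\ $\mu_1\ge N_1-1$; the possible deficit $1$ is exactly the gap between merely counting components and recording that the final region wraps around the hole. Symmetrically $\mu_2\ge N_2-1$, so $\sum_i m_i=\mu_1+\mu_2\ge N_1+N_2-2$, finishing the case $a_1<b_2$.

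\textbf{Step 3: the degenerate level and the main obstacle.} If $a_1=b_2=:c$ there is no separating interval: the transitional level set $\{u=c\}$ meets $\gamma_I$ at the $N_1$ maxima and $\gamma_E$ at the $N_2$ minima and need not be a $1$-manifold. One then works with the finite planar graph $\Gamma\subset\overline{\Omega}$ it forms --- vertices: the interior critical points of $u$ at level $c$, of degree $2(m_i+1)$, together with the $2N_1+2N_2$ boundary contact points, of degree $1$ --- and reads off the Euler characteristics of $\{u\le c\}$ and $\{u\ge c\}$ from $\Gamma$ and (i)--(iii); classifying the combinatorial types of $\Gamma$ that can occur on the annulus leaves precisely the three alternatives (\ref{1.4})--(\ref{1.6}). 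This transitional analysis is the genuinely delicate point: the level-set bookkeeping must be carried out on a \emph{doubly} connected domain, and it is the nontrivial first homology of the annulus --- two boundary circles, one hole --- that simultaneously produces the $(-1)$-type corrections and prevents the trichotomy from collapsing to an equality by these methods. Ruling out pathological coincidences of interior critical values with one another and with $a_1,b_1,b_2,a_2$, so that all the reconnection pictures remain valid, is a routine but necessary part of the argument.
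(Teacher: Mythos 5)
Your strategy is genuinely different from the paper's, and the core of it is sound. The paper never argues by sandwiching: it proves exact identities $\sum_{i\le l}m_i+\delta_2=N_2$ and $\sum_{i>l}m_i+\delta_1=N_1$ with $\delta_1,\delta_2\in\{0,1\}$, where each $\delta$ is $0$ or $1$ according to whether the critical level set on that side contains a closed curve separating $\gamma_I$ from $\gamma_E$ (its Cases $C_1$--$C_3$), and the heavy lifting is the three ``just right'' properties (all critical values on each side coincide; the critical points at each level form a single connected cluster with their level lines; each component of the relevant super/sub-level set carries exactly one boundary extremum), established via Lemma \ref{le2.5}. You instead take the upper bound from Theorem \ref{th1.1} and aim only for $\sum m_i\ge N_1+N_2-2$, which indeed implies the trichotomy while bypassing the ``just right'' machinery entirely; the price is that you get no information about which alternative occurs. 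Your Step 2 is essentially correct and is the real content: the count of components of $\{u<t\}$ starts at $N_1$, ends at $1$ on $(\max_{\gamma_I}\psi_1,\min_{\gamma_E}\psi_2)$, drops by at most the total multiplicity at each interior critical level, and the boundary level $\max_{\gamma_I}\psi_1$ is inert precisely because the hypothesis forces the Hopf gradient to point outward there, so only caps get filled. (The drop really can be strictly less than the multiplicity --- this happens exactly when the critical cluster contains an essential loop, which is where the $-1$ per boundary component comes from --- so the inequality, rather than an equality, is the right thing to aim for.)

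There are, however, two genuine gaps. First, in Step 1 the inference ``$\{u\le t\}$ is an annulus of Euler characteristic $0$, hence there is no critical point with value in $(a_1,b_2)$'' does not follow: the annulus structure is derived for regular values, so it cannot by itself exclude critical ones. You need the separate topological argument of the paper's Lemma \ref{le2.4} (at a critical point with value $t\in[a_1,b_2]$ the $m+1\ge 2$ local sectors of $\{u>t\}$ must all reach $\gamma_E$ and the $m+1\ge2$ sectors of $\{u<t\}$ must all reach $\gamma_I$, which is impossible in the annulus by a Jordan-curve argument). Second, Step 3 is a description of a plan, not a proof; the case $a_1=b_2$ is squarely within the theorem's hypotheses ($Z_1=z_2$ in the paper's notation) and cannot be waved at. It is also not as delicate as you suggest: no classification of combinatorial graph types is needed. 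Since $u\ge b_2=a_1$ on $\gamma_E$, every component of $\{u<t\}$ with $t<a_1$ still meets only $\gamma_I$ and hence contains one of the $N_1$ boundary minima, and $\{u<a_1\}$ is connected because near each boundary maximum $p_j$ the Hopf lemma gives $u<a_1$ on a punctured neighbourhood; a path argument then shows $\{u<t\}$ is already connected for $t$ close to $a_1$, and the counting of Step 2 goes through verbatim. You should also justify the per-level drop bound when several critical points share a level: the correct statement is per connected cluster (a cluster of total multiplicity $M$ has at most $M+1$ adjacent sub-level components, which merge into at least one), which is exactly the content of the paper's Lemma \ref{le2.5}, so some version of that counting cannot be avoided.
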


\begin{Theorem}\label{th1.3} 
Let $\Omega$ be a bounded, smooth and multiply connected domain with the interior boundary $\gamma_I$ and the external boundary $\gamma_E$ in $\mathbb{R}^{2}$. Suppose that $\psi(x)\in C^1(\overline{\Omega})$ is sign-changing, $H$ is a given constant and that $\psi$ has $\widetilde{N}$ zero points on $\gamma_E.$  Let $u$ be a non-constant solution of the following boundary value problem
\begin{equation}\label{1.7}\begin{array}{l}
\left\{
\begin{array}{l}
\sum\limits_{i,j=1}^{2}a_{ij}(x)u_{x_ix_j}(x)+\sum\limits_{i=1}^{2}b_{i}(x)u_{x_i}(x)=0~~~\rm{in}~\Omega,\\
u|_{\gamma_I}=H,~~u|_{\gamma_E}=\psi(x).
\end{array}
\right.
\end{array}\end{equation}
Then $u$ has finite interior critical zero points, denoting by $x_1,\cdots,x_l$, and\\
if $H\neq 0,$ we have
\begin{equation}\label{1.8}\begin{array}{l}
\begin{array}{l}
\sum\limits_{i = 1}^l {{m_i}}\leq \frac{\widetilde{N}}{2},
\end{array}
\end{array}\end{equation}
else $H=0,$ we have
\begin{equation}\label{1.9}\begin{array}{l}
\begin{array}{l}
\sum\limits_{i = 1}^l {{m_i}}\leq \frac{\widetilde{N}}{2}-1,
\end{array}
\end{array}\end{equation}
where critical zero point $x$ means that $u(x)=|\nabla u(x)|=0$ and $m_i$ is the multiplicity of corresponding critical zero point $x_i$.
\end{Theorem}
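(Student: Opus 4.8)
The plan is to read the estimate off the topology of the zero set $Z:=\{x\in\overline\Omega:\ u(x)=0\}$ together with $\partial\Omega$, regarded as a finite graph embedded in the (topological) annulus $\overline\Omega$, and then to invoke the Euler characteristic formula; after replacing $u$ by $-u$ if necessary I may assume $H\ge 0$. The first ingredient is the local structure of $Z$. Since $L$ has no zeroth order term, the strong maximum principle forbids interior local extrema of $u$, so $u$ cannot vanish at an interior local extremum and every interior zero of $u$ lies on a branch of a level-$0$ curve. At an interior critical point $x_0$ of multiplicity $m$ one has the Hartman--Wintner/Bers expansion $u(x)=u(x_0)+P_{m+1}(x-x_0)+o(|x-x_0|^{m+1})$ with $P_{m+1}$ a nonzero homogeneous harmonic polynomial of degree $m+1$; evaluating at a critical zero point $x_i$ (so $u(x_i)=0$) shows that near $x_i$ the set $Z$ consists of exactly $2(m_i+1)$ $C^1$-arcs meeting at $x_i$, and that $x_i$ is isolated, so together with the finiteness of interior critical points already established one gets finitely many interior critical zero points $x_1,\dots,x_l$. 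Elsewhere in $\Omega$, $Z$ is a smooth simple arc; at each of the $\widetilde N$ (transversal, sign-changing) zeros of $\psi$ on $\gamma_E$ exactly one arc of $Z$ enters $\Omega$; $Z$ has no closed contractible component, since such a loop would bound a disc in $\Omega$ on whose boundary $u\equiv 0$, forcing $u\equiv 0$ by the maximum principle and unique continuation; and if $H=0$ then in addition $\gamma_I\subset Z$, with $u$ of a fixed sign on a collar of $\gamma_I$ in the generic case (barring critical points of $u$ on $\gamma_I$ itself).

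Next I would set $\Gamma:=Z\cup\partial\Omega$ and view it as a $1$-complex whose vertices are the $x_i$ (degree $2(m_i+1)$), the $\widetilde N$ zeros of $\psi$ on $\gamma_E$ (degree $3$: two edges along $\gamma_E$, one arc into $\Omega$), and a finite set of auxiliary subdivision points (one on $\gamma_I$, one on each closed component of $Z$). The handshake identity gives $\chi(\Gamma)=V-E=-\sum_{i=1}^{l}m_i-\frac{\widetilde N}{2}$, the auxiliary subdivisions and the circles $\gamma_I$ and the closed components of $Z$ contributing $0$ to $\chi$. Since $\overline\Omega$ is a closed annulus and $\Gamma$ a closed subcomplex, additivity of the Euler characteristic yields $0=\chi(\overline\Omega)=\chi(\Gamma)+\sum_j\chi(U_j)$, where the $U_j$ are the connected components of $\Omega\setminus Z$, i.e.\ the components of $\Omega^+:=\{u>0\}$ and of $\Omega^-:=\{u<0\}$. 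Each $U_j$ is a connected planar surface, so $\chi(U_j)=2-b_j\le 1$, where $b_j$ is its number of boundary circles. Combining the two displayed facts,
\[
\sum_{i=1}^{l}m_i=\sum_j\chi(U_j)-\frac{\widetilde N}{2}.
\]

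It then remains to bound $\sum_j\chi(U_j)$, and two observations do this. First, by the maximum principle every component of $\Omega^\pm$ must touch $\partial\Omega$ along an arc where $u$ has the corresponding sign, and distinct components cannot touch the same maximal such arc (a collar of such an arc lies in a single component); since the maximal positive and negative arcs of the sign-changing $\psi$ on $\gamma_E$ alternate, there are $\widetilde N/2$ of each, so the number of components of $\Omega^+$ is at most $\widetilde N/2+1$ when $H>0$ (the extra one being the component meeting $\gamma_I$) and at most $\widetilde N/2$ when $H=0$, while that of $\Omega^-$ is at most $\widetilde N/2$; hence the total number of faces is $\le\widetilde N+1$ if $H\neq 0$ and $\le\widetilde N$ if $H=0$. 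Secondly, the unique component $U_0$ of $\Omega^+$ containing a collar of $\gamma_I$ has $\gamma_I$ as one full boundary circle and, because $\Omega^-\neq\emptyset$ forces $U_0\subsetneq\Omega$, at least one further boundary circle, so $b_{U_0}\ge 2$ and $\chi(U_0)\le 0$. Therefore $\sum_j\chi(U_j)\le(\text{number of faces})-1$, which is $\le\widetilde N$ if $H\neq 0$ and $\le\widetilde N-1$ if $H=0$; substituting into the displayed identity gives $\sum_{i=1}^{l}m_i\le\frac{\widetilde N}{2}$ when $H\neq 0$ and $\sum_{i=1}^{l}m_i\le\frac{\widetilde N}{2}-1$ when $H=0$, as claimed (a touching, non-sign-changing zero only improves these bounds, since it merges two boundary arcs into one).

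The step I expect to be the real obstacle is the first one: rigorously establishing that $Z\cup\partial\Omega$ is genuinely a finite embedded graph with the stated local models. This needs the precise Hartman--Wintner/Bers structure of $u$ at interior critical points, a boundary analogue near the zeros of $\psi$ on $\gamma_E$ to guarantee transversal crossing and to preclude accumulation of $Z$ at $\partial\Omega$, and -- in the case $H=0$ -- control of possible critical points of $u$ on $\gamma_I$, which would subdivide $\gamma_I$ and require a correspondingly refined count in the Euler characteristic bookkeeping.
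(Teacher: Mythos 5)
Your argument is correct in substance but takes a genuinely different route from the paper's. The paper works directly with the nodal set: it shows (via Hartman--Wintner and Lemmas 3.1 and 4.1 of \cite{Deng3}) that the critical zero points together with the zero level lines through them form $q\ge 1$ connected sets carrying exactly $\sum_i m_i+q$ non-closed zero level lines (one fewer when a closed nodal curve encircling $\gamma_I$ passes through a critical point), and then counts endpoints: each non-closed zero line ends at two boundary zeros when $H\neq 0$, while for $H=0$ each connected set can absorb at most one endpoint on $\gamma_I$, giving $2(\sum m_i+q)\le\widetilde N$ resp.\ $2(\sum m_i+q)-q\le\widetilde N$. You instead package the same combinatorics into an Euler characteristic identity $\sum_i m_i=\sum_j\chi(U_j)-\widetilde N/2$ and bound the right side by counting nodal domains against the alternating sign arcs of $\psi$ on $\gamma_E$, using that the face carrying a collar of $\gamma_I$ is non-simply-connected. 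Your version is arguably cleaner in that it dispenses with the case split over the number $q$ of connected nodal clusters and over the presence of a closed nodal curve --- these are automatically absorbed into $\chi(\Gamma)$ and the face count --- and it makes the role of the annular topology explicit ($\chi(\overline\Omega)=0$ and the $\chi\le 0$ face are exactly where multiple connectivity enters). The price is the bookkeeping you flag at the end: you must justify that $Z\cup\partial\Omega$ is a finite embedded graph with the stated local models (interior vertices of degree $2(m_i+1)$, boundary vertices of degree $3$, no accumulation at $\partial\Omega$), and in the case $H=0$ you must rule out or separately account for critical zero points on $\gamma_I$ itself, which would add vertices of higher degree on $\gamma_I$ and destroy the fixed-sign collar; these are precisely the facts the paper imports from Hartman--Wintner and from \cite{Deng3} (every non-closed zero level line reaches $\gamma_E$, and the nodal cluster meets $\gamma_I$ in at most one point when $H=0$), so your proof is complete to the same standard of rigor as the paper's once those inputs are cited.
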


\begin{Theorem}\label{th1.4} 
Suppose that the domain $\Omega$ satisfies the hypothesis of Theorem \ref{th1.1}. Let $u$ be a non-constant solution of (\ref{1.2}). In addition, suppose that $\psi_1(x)$ and $\psi_2(x)$ are sign-changing and have $\widetilde{N}_1$ and $\widetilde{N}_2$ zero points on $\gamma_I$ and $\gamma_E$ respectively. Then $u$ has finite interior critical zero points and
\begin{equation}\label{1.10}\begin{array}{l}
\begin{array}{l}
\sum\limits_{i = 1}^l {{m_i}}\leq \frac{\widetilde{N}_1+\widetilde{N}_2}{2},
\end{array}
\end{array}\end{equation}
where $m_i$ is as in Theorem \ref{th1.3}.
\end{Theorem}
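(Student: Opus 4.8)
The plan is to run, now symmetrically on the two boundary circles, the nodal-set / Euler-characteristic argument behind Theorem~\ref{th1.3}: in fact (\ref{1.10}) should reduce to that reasoning once $\gamma_I$ is allowed to be cut into arcs by the sign changes of $\psi_1$, instead of lying entirely inside ($H=0$) or entirely outside ($H\neq0$) the nodal set. Write $\mathcal N=\{x\in\overline\Omega:\ u(x)=0\}$; since the operator in (\ref{1.2}) has no zeroth-order term, the strong maximum principle and the Hopf boundary lemma apply to both $u$ and $-u$. By the well-known local representation of solutions of planar second-order elliptic equations (Bers, Hartman--Wintner), at an interior zero $x_0$ of $u$, after a linear change of coordinates normalising the principal part to $\Delta$, one has $u(x)=p_N(x-x_0)+o(|x-x_0|^N)$ with $p_N\not\equiv0$ homogeneous harmonic of degree $N\ge1$; $x_0$ is a critical zero point precisely when $N\ge2$, in which case its multiplicity is $m_0=N-1$ and near $x_0$ the set $\mathcal N$ is a $C^1$ perturbation of the zero set of $p_N$, i.e.\ $m_0+1$ arcs crossing at $x_0$ (so $2(m_0+1)$ branches), across which $u$ alternates sign. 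Such points are isolated in $\Omega$; combined with $\psi_1,\psi_2\in C^1(\overline\Omega)$ having only finitely many zeros on $\gamma_I,\gamma_E$ and a boundary version of the asymptotics, this gives finitely many interior critical zero points $x_1,\dots,x_l$ and shows that $G:=\mathcal N\cup\partial\Omega$ is a finite graph embedded in the closed annulus $\overline\Omega$, whose branch points are the $x_i$ (of degree $2(m_i+1)$), the boundary zeros of $\psi_1,\psi_2$, and auxiliary degree-$2$ vertices one may place on any nodal loop carrying none.

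Next, a maximum-principle count of the nodal components. Each component $C$ of $\overline\Omega\setminus\mathcal N$ is open, connected and sign-definite; if, say, $C$ is positive then $\sup_{\overline C}u$ is attained at some $p\in\overline C$, and $p$ can lie neither in $C$ (strong maximum principle, $C\subset\Omega$) nor on $\mathcal N$ (where $u=0<\sup_{\overline C}u$), hence $p\in\partial\Omega$ with $\psi_i(p)>0$; so $\overline C$ meets $\partial\Omega$ on a maximal arc where $\psi_i$ keeps a definite sign. Conversely, a thin one-sided neighbourhood in $\Omega$ of such an arc is connected and of the matching sign (using the Hopf lemma at any non-sign-changing zero contained in it), hence lies in a single component, onto which the previous sentence maps surjectively. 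Letting $a$ be the number of such boundary arcs, that is, the number of sign changes of $\psi_1$ on $\gamma_I$ plus that of $\psi_2$ on $\gamma_E$ (an even number $\le\widetilde N_1+\widetilde N_2$), we conclude that $F:=\#\{\text{components of }\overline\Omega\setminus\mathcal N\}\le a$.

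Now the Euler lower bound and the conclusion. Additivity of the Euler characteristic over the decomposition of $\overline\Omega$ into $G$ and its complementary open faces gives $\sum_{\text{faces }f}\chi(f)=\chi(\overline\Omega)-\chi(G)=0-(V-E)=E-V$, and since every open face of $G$ in the annulus has $\chi(f)\le1$, the unconditional bound $F\ge E-V$ follows. Computing from $G$: $V=l+\widetilde N_1+\widetilde N_2$ (up to net-zero loop vertices), while $\sum_v\deg v=2E$ with $\deg x_i=2(m_i+1)$ and with each boundary zero of degree $2+(\text{number of nodal arcs it emits into }\Omega)$, where every sign-changing boundary zero emits an odd, hence positive, number of such arcs; keeping only these contributions yields $E-V\ge\sum_i m_i+\tfrac a2$. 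Therefore $\sum_i m_i+\tfrac a2\le F\le a$, i.e.\ $\sum_i m_i\le\tfrac a2\le\tfrac12(\widetilde N_1+\widetilde N_2)$, which is (\ref{1.10}). The same bookkeeping is consistent with Theorem~\ref{th1.3}: if $u\equiv H\ne0$ on $\gamma_I$ the component bordering $\gamma_I$ contains a whole annular neighbourhood of $\gamma_I$, so $G$ has an annular face and $F\ge E-V+1$, producing the sharper $\tfrac{\widetilde N}{2}$; and if $H=0$, then $\gamma_I\subset\mathcal N$ adds such a face while contributing no boundary arc, producing $\tfrac{\widetilde N}{2}-1$.

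I expect the main obstacle to be the fine behaviour of $\mathcal N$ at $\partial\Omega$, which is precisely the ingredient that must be upgraded relative to Theorem~\ref{th1.3} (where $\psi_1$ is constant and $\mathcal N$ is trivial along $\gamma_I$): one must prove rigorously that a sign-changing zero of $\psi_i$ emits an odd, hence nonzero, number of nodal arcs into $\Omega$; that at a non-sign-changing zero the Hopf lemma forces $u$ to keep its sign on the interior side (so that adjacent boundary arcs of equal sign merge into one component, which is what makes the count $\le a$ rather than $\le \widetilde N_1+\widetilde N_2-a$ when such zeros occur); and that $\mathcal N$ cannot accumulate at $\partial\Omega$ — these are exactly the statements that turn $G$ into a genuine finite graph with the asserted vertex degrees. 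One must also check that possible degenerate features (higher-order interior or boundary zeros, non-transverse contacts of $\mathcal N$ with $\partial\Omega$, closed nodal loops encircling $\gamma_I$) only increase $E-V$ or the number of annular faces, hence only strengthen the inequality. The remaining steps — the interior local structure at the $x_i$, the maximum-principle argument, and Euler's relation — are then routine.
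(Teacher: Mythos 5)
Your route is genuinely different from the paper's. The paper (Cases $E_1$ and $E_2$ of its proof) counts directly the ``non-closed zero level lines'': it argues that the critical zero points together with the nodal lines through them carry either exactly $\sum_i m_i$ or at least $\sum_i m_i+1$ arcs with free ends, that each such arc meets $\gamma_I\cup\gamma_E$ in two points, and concludes $2\sum_i m_i\le\widetilde N_1+\widetilde N_2$. Your Euler-characteristic bookkeeping on the graph $G=\mathcal N\cup\partial\Omega$ is a more systematic version of the same count: it treats arbitrarily many connected components of the nodal set and closed loops around $\gamma_I$ uniformly (the paper handles these case by case, and silently assumes that distinct nodal arcs end at distinct boundary zeros), and it makes visible the role of non-sign-changing boundary zeros, which the hypothesis allows since $\widetilde N_i$ counts all zeros of $\psi_i$, not only the sign changes. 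The price you pay is the face-counting step $F\le a$, which the paper never needs.

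That step is where the one real gap sits, and you half-identify it yourself. The assertion that ``at a non-sign-changing zero the Hopf lemma forces $u$ to keep its sign on the interior side'' is false: by the boundary Hartman--Wintner expansion, a zero of $\psi_i$ of even order can emit a positive even number $s_p=2j$ of nodal arcs into $\Omega$ (model: $u\sim r^2\cos 2\theta$ in a half-disk, positive on the two boundary rays, negative in the interior sector), and then the two boundary arcs adjacent to $p$ need not lie on the same face, so $F\le a$ can fail. The inequality survives only by compensation: writing $s_p$ for the number of arcs emitted at a boundary zero $p$, one has $2E=\sum_i 2(m_i+1)+\sum_p(2+s_p)$, hence $E-V=\sum_i m_i+\tfrac12\sum_p s_p\ge\sum_i m_i+\tfrac a2+\tfrac12\sum_{p\,\mathrm{nsc}}s_p$, while the face count satisfies $F\le a+\tfrac12\sum_{p\,\mathrm{nsc}}s_p$ (each emitting non-sign-changing zero splits its arc into at most $1+s_p/2$ face-germs); the chain $\sum_i m_i+\tfrac a2+\tfrac12\sum_{\mathrm{nsc}}s_p\le E-V\le F\le a+\tfrac12\sum_{\mathrm{nsc}}s_p$ then still closes to $\sum_i m_i\le a/2\le\tfrac12(\widetilde N_1+\widetilde N_2)$. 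Replace the Hopf-lemma claim by this two-sided bookkeeping; together with the routine checks you already list (odd emission at sign-changing zeros, non-accumulation of $\mathcal N$ at the boundary, and the fact that a closed nodal loop must encircle $\gamma_I$ by the maximum principle and only contributes faces of Euler characteristic at most $1$), your argument is then complete.
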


\begin{Remark}\label{re1.5}
According to the assumptions of Theorem \ref{th1.2} and the strong maximum principle, we know that the local minima of $u$ on $\gamma_I$ and the local maxima of $u$ on $\gamma_E$ are the global minima and global maxima of $u$ on $\overline{\Omega}$ respectively.
\end{Remark}

\begin{Remark}\label{re1.6}
If the assumption $\mathop {\min}_{\gamma_E} \psi_2(x)\geq \mathop {\max}_{\gamma_I} \psi_1(x)$ is replaced by $\mathop {\max}_{\gamma_E} \psi_2(x)\leq \mathop {\min}_{\gamma_I} \psi_1(x)$ in Theorem \ref{th1.2}, the conclusions are still valid. In addition, if the elliptic operator $L$ in (\ref{1.1}) is replaced by $Lu:=\sum_{i,j=1}^{2}a_{ij}(\nabla u)u_{x_ix_j}(x)=0,$ where $a_{ij}$ is smooth and L is uniformly elliptic in $\Omega,$ then the conclusions of Theorem \ref{th1.1}, Theorem \ref{th1.2}, Theorem \ref{th1.3} and Theorem \ref{th1.4} still hold.
\end{Remark}

\begin{Remark}\label{re1.7}
If the elliptic operator $L$ in (\ref{1.1}) is replaced by $Lu:=\sum_{i,j=1}^{2}a_{ij}(x)u_{x_ix_j}(x)+\sum_{i=1}^{2}b_{i}(x)u_{x_i}(x)+c(x)u(x)=0,$ where $c(x)$ is smooth, $c(x)\leq 0$ in $\Omega$ and $a_{ij},b_i$ is as in (\ref{1.1}). By the results of \cite{Han}, we know that the interior critical zero points are isolated. Then the conclusions of Theorem \ref{th1.3} and Theorem \ref{th1.4} still hold.
\end{Remark}

Throughout this paper, we set $z_1:=\mathop{\min }_{\gamma_I}\psi_1(x),~Z_1:=\mathop{\max }_{\gamma_I}\psi_1(x),$ $z_2:=\mathop{\min }_{\gamma_E}\psi_2(x),~Z_2:=\mathop{\max }_{\gamma_E}\psi_2(x).$ For the sake of clarity, we now explain the key ideas which are used to prove the main results.  Based on the fine analysis about the distributions of connected components of some level sets, we prove (\ref{1.3}) by induction and the strong maximum principle. On the other hand, we develop a new method to prove (\ref{1.4}), (\ref{1.5}) and (\ref{1.6}), which is different from the methods in \cite{Alessandrini,Sakaguchi2}. Concerning the case of simply connected domains. In \cite{Alessandrini}, the author proved the result by induction on the number $N$ of global maximal points on $\partial\Omega$. In \cite{Sakaguchi2}, the author showed the result by using induction and a differential homeomorphism method.

Actually, when $z_2\geq Z_1$ and $u$ has only $N_1$ ($N_2$) equal local maxima and $N_1$ ($N_2$) equal local minima relative to $\overline{\Omega}$ on $\gamma_I$ ($\gamma_E$), we will prove that all critical values are equal to one of the two values, i.e., $u(x_{1})=\cdots=u(x_l)=t_1$ and $u(x_{l+1})=\cdots=u(x_k)=t_0$ for some $t_1\neq t_0$. We obtain (\ref{1.4}), (\ref{1.5}) or (\ref{1.6}) by showing that there are the following three ``{\bf just right}''s similar to those in \cite{Deng3}:

 (i) the first ``{just right}'' is that all critical values are equal to one of the two values;

 (ii) the second  ``{just right}'' is that  critical points together with the corresponding level lines of $\{x\in \Omega: u(x)=t_1~(t_0)~\mbox{for some}~t_1\in (z_2,Z_2)~(t_0\in (z_1,Z_1))\}$, which meet $\gamma_E$ ($\gamma_I$), clustering round these points exactly form one connected set;

 (iii) the third ``{just right}'' is that  every connected component of $\{x\in \Omega: u(x)>t_1>z_2\}$ ($\{x\in \Omega: u(x)<t_0<Z_1\}$) has exactly one global maximal (minimum) point on $\gamma_E$ ($\gamma_I$).

The rest of this paper is organized as follows. In Section 2, we give the fine analysis about the distributions of connected components of super-level sets $\{x\in \Omega: u(x)>t\}$ and sub-level sets $\{x\in \Omega: u(x)<t\}$ for some $t$. Among others, we will prove Theorems \ref{th1.1} and \ref{th1.2} in Sections 3 and 4 respectively, and Theorems \ref{th1.3} and \ref{th1.4} in Section 5.

\section{Preliminaries}
~~~~~According to the assumption of Theorem \ref{th1.1}, we know that one of the following three occurs: (I) the global maximal points are only on $\gamma_E$, (II) the global maximal points are only on $\gamma_I$, (III) the global maximal points are on $\gamma_E$ and $\gamma_I$ at the same time.

For (I), we have one of the following three cases
$$z_1<Z_1\leq z_2<Z_2,~z_1<z_2<Z_1<Z_2~\mbox{and}~z_2\leq z_1<Z_1<Z_2.$$
One can easily observe that the third case is similar to the second one from the proofs of Lemmas in this section. Moreover  during the course of proofs we also know that (II) is obviously similar to (I) and (III) is simpler than (I). In fact (I) has three cases and (III) has only two cases $z_1 \leq z_2 < Z_1 = Z_2$ and $z_2 <z_1 < Z_1 = Z_2.$ Furthermore, there is a case $z_1 < z_2 < Z_1 < Z_2$ in (I), which is the most complex case
that we discuss. Thus, from later on we only describe the lemmas and prove them for Case (I). Our main idea is to sort the size of the four values $Z_2, z_2, Z_1, z_1,$ and then study the distribution of the super-level (sub-level) sets of the corresponding interval.

In order to prove Theorem \ref{th1.1} for the cases of $z_1<Z_1\leq z_2<Z_2$ and $z_1<z_2<Z_1<Z_2$, we need the following lemmas.

\begin{Lemma}\label{le2.1}
 Suppose that $x_0$ is an interior critical point of $u$ in $\Omega$ and that $m$ is the multiplicity of $x_0.$ Then there exist $m+1$ distinct connected components of $\{x\in \Omega :u(x)>u(x_0)\}$ and $\{x\in \Omega :u(x)<u(x_0)\}$ clustering around the point $x_0$ respectively.
 \begin{proof}[Proof] By the results of Hartman and Wintner \cite{Hartman}, in a neighborhood of $x_0$, the level line $\{x\in \Omega : u(x)=u(x_0)\}$ consists of $m+1$ simple arcs intersecting at $x_0$. Then the conclusion can be easily obtained.
\end{proof}
\end{Lemma}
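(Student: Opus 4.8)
The plan is to reduce the statement entirely to the local behaviour of $u$ near the single point $x_0$, as furnished by the Hartman--Wintner theorem \cite{Hartman}. First I would record that, since $a_{ij},b_i$ are smooth, $L$ is uniformly elliptic and $u$ is non-constant, there are $C^1$ local coordinates centered at $x_0$ in which the principal part of $L$ at $x_0$ is the Laplacian; writing $z=x_1+ix_2$ for these coordinates, the Hartman--Wintner expansion provides a constant $a\neq 0$ with
\[
u(x)-u(x_0)=\mathrm{Re}\,(a\,z^{\,m+1})+o(|z|^{m+1}),\qquad \nabla\!\bigl(u-u(x_0)\bigr)(x)=\nabla\,\mathrm{Re}\,(a\,z^{\,m+1})+o(|z|^{m}),
\]
the integer $m$ being precisely the multiplicity of $x_0$ (this is the convention of the paper and is exactly what \cite{Hartman} establishes; recording this expansion, and the ``$m+1$ arcs'' description below, is the only non-elementary input).

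Next I would analyse the model polynomial $P:=\mathrm{Re}\,(a\,z^{\,m+1})$, which in polar coordinates equals $c\,r^{\,m+1}\cos\bigl((m+1)\theta+\theta_0\bigr)$ for some $c\neq 0$: its zero set is the union of $2(m+1)$ equally spaced rays from the origin (that is, $m+1$ lines through the origin), and these rays cut any disk $B_\rho(x_0)$ into $2(m+1)$ open sectors on which $P$ is alternately positive and negative. Using the gradient part of the expansion with the implicit function theorem on the punctured disk $0<|z|<\rho$, this upgrades to the statement quoted in the paper's proof: for $\rho$ small enough, $\{x\in\Omega:u(x)=u(x_0)\}\cap B_\rho(x_0)$ consists of exactly $m+1$ simple $C^1$ arcs meeting at $x_0$ and tangent to those $m+1$ lines, so that $B_\rho(x_0)\setminus\{u=u(x_0)\}$ has exactly $2(m+1)$ connected components, on each of which $u-u(x_0)$ has constant sign, the signs alternating cyclically around $x_0$.

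Finally I would conclude. Among the $2(m+1)$ sectors, exactly $m+1$ lie in $\{x\in\Omega:u(x)>u(x_0)\}$, say $S_1,\dots,S_{m+1}$, and the remaining $m+1$ lie in $\{x\in\Omega:u(x)<u(x_0)\}$. Each $S_j$ is connected with $x_0\in\overline{S_j}$, hence lies in a connected component of $\{x\in\Omega:u(x)>u(x_0)\}$ that accumulates at $x_0$; and two different $S_j$'s give different components, since inside $B_\rho(x_0)$ they are separated by sub-level sectors, while a component of $\{u>u(x_0)\}$ clustering at $x_0$ meets $B_\rho(x_0)$ only inside $\bigcup_j S_j$ (near $x_0$ it avoids both the level line and the sub-level set) and is therefore pinned down near $x_0$ by a single $S_j$. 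Thus the $S_j$'s pick out exactly $m+1$ distinct connected components of $\{x\in\Omega:u(x)>u(x_0)\}$ clustering around $x_0$, and the argument for $\{x\in\Omega:u(x)<u(x_0)\}$ is word for word the same. The main obstacle is purely the first step — invoking the Hartman--Wintner asymptotics and the resulting local ``$m+1$ arcs'' picture, and checking that the paper's notion of multiplicity matches the exponent $m+1$ there; once that is in place, the counting of alternating sectors and components is routine.
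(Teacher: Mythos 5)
Your proposal follows exactly the same route as the paper: the entire content of the paper's proof is the citation of Hartman--Wintner for the local ``$m+1$ arcs through $x_0$'' picture, followed by ``the conclusion can be easily obtained,'' and your first two steps (the expansion $u-u(x_0)=\mathrm{Re}\,(a z^{m+1})+o(|z|^{m+1})$, the $2(m+1)$ alternating sectors, and the implicit-function-theorem upgrade to genuine arcs) are a correct and welcome expansion of that citation.

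The one place where you go beyond the paper is also the one place where your argument does not hold up: the claim that two different positive sectors $S_i,S_j$ necessarily lie in \emph{different} connected components of the global set $\{x\in\Omega:u(x)>u(x_0)\}$. Your justification --- that a component clustering at $x_0$ meets $B_\rho(x_0)$ only inside $\bigcup_j S_j$ and ``is therefore pinned down near $x_0$ by a single $S_j$'' --- is a non sequitur: a single global component can intersect $B_\rho(x_0)$ in two separate sectors and be connected through a path that leaves the ball. In a simply connected domain one rules this out with the maximum principle (a loop in $\{u>u(x_0)\}\cup\{x_0\}$ would trap a component of $\{u<u(x_0)\}$ away from $\partial\Omega$), but in the multiply connected setting of this paper such merging can genuinely occur when level lines wrap around $\gamma_I$ --- this is precisely the configuration the paper confronts later (Lemma 2.5, Remark 2.6). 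The paper's lemma is therefore to be read as a statement about the $m+1$ local components adjacent to $x_0$, with the possible coincidence of their global components accounted for in the subsequent counting arguments; your proof is complete for that local reading, but the final ``distinct global components'' step should either be dropped or be supplemented by the maximum-principle argument in the cases where it is actually available.
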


\begin{Lemma}\label{le2.2} 
(i) Suppose that $z_1<Z_1\leq z_2<Z_2$ and $u$ is a non-constant solution of (\ref{1.2}), then we have:

(1) For any  $t\in (z_2,Z_2),$ any connected component $\omega$ of $\{x\in \Omega : u(x)>t\}$ has to meet the external boundary $\gamma_E.$

(2) For any  $t\in (z_1,Z_1),$ any connected component $\omega$ of $\{x\in \Omega : u(x)<t\}$ has to meet the interior boundary $\gamma_I.$\\
(ii) Suppose that $z_1<z_2<Z_1<Z_2$ and $u$ is a non-constant solution of (\ref{1.2}), then we have:

(3) For any  $t\in [Z_1,Z_2),$ any connected component $\omega$ of $\{x\in \Omega : u(x)>t\}$ has to meet the external boundary $\gamma_E.$

(4) For any  $t\in (z_2,Z_1),$ any connected component $\omega$ of $\{x\in \Omega : u(x)>t\}$ or $\{x\in \Omega : u(x)<t\}$ may meet $\gamma_I$ or $\gamma_E.$

(5) For any  $t\in (z_1,z_2],$ any connected component $\omega$ of $\{x\in \Omega : u(x)<t\}$ has to meet the interior boundary $\gamma_I.$
\end{Lemma}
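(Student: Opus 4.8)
The plan is to exploit the strong maximum principle together with the ordering of the boundary data $z_1 < Z_1 \le z_2 < Z_2$ (resp.\ $z_1 < z_2 < Z_1 < Z_2$), reasoning by contradiction on each super- or sub-level set. First I would set up the basic dichotomy: if $\omega$ is a connected component of $\{x\in\Omega: u(x)>t\}$, then $\partial\omega \subset \{u=t\}\cup\gamma_I\cup\gamma_E$, and $\omega$ can touch $\gamma_I$ or $\gamma_E$ only at points where the corresponding boundary datum exceeds $t$ (strictly, or weakly, with $u$ continuous up to the boundary). For statement (1), suppose for contradiction that some component $\omega$ of $\{u>t\}$ with $t\in(z_2,Z_2)$ does not meet $\gamma_E$. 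Since $t>z_2\ge Z_1\ge\psi_1$ on $\gamma_I$, $\omega$ cannot meet $\gamma_I$ either; hence $\overline{\omega}\subset\Omega$ and $u\equiv t$ on $\partial\omega$. By the strong maximum principle (in the form $\max_{\overline\omega} u$ is attained on $\partial\omega$, since $L$ has no zeroth-order term), $u\le t$ on $\omega$, contradicting $u>t$ there. This forces $\omega$ to meet $\gamma_E$. Statement (2) is the mirror image, using the minimum principle and the inequality $t<Z_1\le z_2\le\psi_2$ on $\gamma_E$, so that a component of $\{u<t\}$ can only touch $\gamma_I$; if it touched neither boundary, the minimum principle would give $u\ge t$ on $\omega$, a contradiction.

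For part (ii), statement (3) with $t\in[Z_1,Z_2)$ runs exactly as (1): since $t\ge Z_1\ge\psi_1$ on $\gamma_I$, a component of $\{u>t\}$ cannot meet $\gamma_I$, so it must meet $\gamma_E$ by the maximum principle. Statement (5) with $t\in(z_1,z_2]$ is the mirror: since $t\le z_2\le\psi_2$ on $\gamma_E$, a component of $\{u<t\}$ cannot meet $\gamma_E$, hence meets $\gamma_I$. Statement (4), for $t\in(z_2,Z_1)$, is genuinely an assertion that \emph{neither} boundary is forced: here $t>z_2$ means $\{u>t\}$ could in principle avoid $\gamma_E$ but must then meet $\gamma_I$ (since $t<Z_1$ allows $u>t$ somewhere on $\gamma_I$), and symmetrically for $\{u<t\}$. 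The ``may meet $\gamma_I$ or $\gamma_E$'' phrasing only requires ruling out the interior-only case: by the same maximum/minimum principle argument, a component $\omega$ of $\{u>t\}$ with $\overline\omega\subset\Omega$ is impossible, so $\omega$ meets $\gamma_I\cup\gamma_E$; one does not, and should not, try to pin down which one. So for (4) the proof is just the interior-exclusion step, applied to both $\{u>t\}$ and $\{u<t\}$.

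The one technical point to handle carefully is the boundary-touching analysis: when $\omega$ meets $\partial\Omega$ I must argue that along the part of $\partial\omega$ lying on a boundary curve the datum is $\ge t$ (so that the ``$u=t$ on the free part, $u\ge t$ on the fixed part'' setup lets me still invoke the maximum principle to reach a contradiction, or rather to not reach one). Concretely, the clean statement is: if $\omega$ is a component of $\{u>t\}$ that meets only $\gamma_I$ among the two boundary curves but $\gamma_I\cap\overline\omega$ consists of points where $\psi_1\le t$, that is again impossible; but under the hypotheses of (1)/(3) the strict inequality $t>\sup_{\gamma_I}\psi_1$ makes $\gamma_I\cap\overline\omega=\emptyset$ automatic, which is why those cases are clean. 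I expect the main (though modest) obstacle to be bookkeeping the closed vs.\ open inequalities at the endpoints $t=Z_1$ in (3) and $t=z_2$ in (5), where one needs the strong maximum principle applied to the relatively open set $\{u>t\}$ together with continuity of $u$ up to $\overline\Omega$; there are no hard estimates, only careful case separation.
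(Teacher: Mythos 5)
Your proposal is correct and follows essentially the same route as the paper: the ordering of the boundary data rules out contact with one boundary curve, and the strong maximum principle (no zeroth-order term) excludes a component compactly contained in $\Omega$, forcing contact with the other boundary curve. The paper's own proof is just a terser version of this argument, likewise dismissing (4) as immediate.
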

\begin{proof}[Proof](1) For any $t\in (z_2,Z_2),$ suppose that $A$ is a connected component of $\{x\in \Omega : u(x)>t\}$.  According to the assumption of $u|_{\gamma_I}=\psi_1(x)$ and $z_2\geq Z_1$, we know that $A$ can not contain the interior boundary $\gamma_I$. Then the strong maximum principle shows that the connected component $A$ has to meet the external boundary $\gamma_E.$ The proofs of (2), (3) and (5) are same as the proof of (1) and the results of (4) naturally hold.
\end{proof}

\begin{Lemma}\label{le2.3}
Let $u$ be a non-constant solution to (\ref{1.2}). Then $u$ has finite interior critical points in $\Omega$.
\begin{proof}[Proof] We set up the usual contradiction argument. Since the theorem of Hartman and Wintner \cite{Hartman} shows that the interior critical points of $u$ are isolated, so we suppose that $u$ has infinite interior critical points in $\Omega,$ denoting by $x_1,x_2,\cdots$. The results of Lemma \ref{le2.1} and Lemma \ref{le2.2} show that there exist infinite connected components of $\{x\in \Omega :u(x)>u(x_i)\}$ and $\{x\in \Omega :u(x)<u(x_i)\}~(i=1,2,\cdots)$, which meet the boundary $\gamma_E$ or $\gamma_I$. The strong maximum principle implies that there totally exist infinite local maximal points and minimum points on $\gamma_E$ and $\gamma_I$, this contradicts with the assumption of Theorem \ref{th1.1}.
\end{proof}
\end{Lemma}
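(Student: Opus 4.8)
The plan is a proof by contradiction built on the local structure of the critical set and on Lemmas~\ref{le2.1} and~\ref{le2.2}. First recall that, since $L$ has smooth coefficients and is uniformly elliptic, a non-constant solution of (\ref{1.2}) is smooth in $\Omega$, and by the theorem of Hartman and Wintner \cite{Hartman} its interior critical points are isolated: near such a point $x_0$ one has $u(x)-u(x_0)=P_{m+1}(x-x_0)+o(|x-x_0|^{m+1})$ with $P_{m+1}$ a nonzero homogeneous polynomial of degree $m+1$ harmonic for the frozen principal part, where $m=m(x_0)\geq 1$ is the multiplicity. In particular each $x_0$ has a well-defined finite multiplicity, it is a genuine saddle (no interior local extremum, by the strong maximum principle applied to $u$ and to $-u$), and $u(x_0)$ lies strictly between $\min_{\overline\Omega}u$ and $\max_{\overline\Omega}u$. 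Suppose, for contradiction, that $u$ has infinitely many interior critical points; being isolated, they form a sequence $x_1,x_2,\dots$ with no accumulation point in $\Omega$.

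Now feed this sequence into Lemmas~\ref{le2.1} and~\ref{le2.2}. By Lemma~\ref{le2.1}, for every $n$ there are at least $m_n+1\geq 2$ distinct connected components of $\{x\in\Omega:u(x)>u(x_n)\}$ and at least $m_n+1\geq 2$ distinct connected components of $\{x\in\Omega:u(x)<u(x_n)\}$ clustering around $x_n$. After the case reductions recorded at the start of this section (so that we may argue in Case (I), the others being analogous or simpler), the thresholds $u(x_n)$ fall in the ranges covered by Lemma~\ref{le2.2}, hence every one of these components must meet $\gamma_E$ or $\gamma_I$. One then upgrades ``meets the boundary'' to ``carries a boundary extremum'': if a component $\omega$ of $\{u>t\}$ meets $\gamma_E$, then the relatively open set $\{x\in\gamma_E:u(x)>t\}$ has a nonempty arc whose closure meets $\overline\omega$, and on the closure of that arc $\psi_2$ attains an interior maximum; since $\psi_2=t$ at the endpoints of the arc, this interior maximum is a local maximum point of $\psi_2$ on $\gamma_E$ with value $>t$. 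Symmetrically, a component of $\{u<t\}$ meeting $\gamma_I$ or $\gamma_E$ yields a local minimum point of $\psi_1$ or $\psi_2$ there. Thus the infinitely many components produced above generate boundary local extrema of $\psi_1,\psi_2$.

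The delicate point --- which I expect to be the main obstacle --- is to conclude that \emph{infinitely many distinct} boundary local extrema arise, since a priori many interior critical points could feed, through nested components, into the same finite list of extrema. I would resolve this by a subsequence argument on the critical values $\{u(x_n)\}$. If these take infinitely many distinct values, fix a boundary local maximum $q$ of $\psi_1$ or $\psi_2$ with value $v$ and note that $t\mapsto(\text{the component of }\{u>t\}\text{ whose closure contains }q)$ is monotone (nested) for $t<v$; an isolated critical point $x_n$ whose super-level cluster at level $u(x_n)$ reaches $q$ must lie on the boundary of that nested family, i.e.\ on the descending branch issuing from $q$, and only finitely many of the isolated $x_n$ can lie on that branch, so infinitely many distinct $q$'s are needed. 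If instead a single value $t^{\ast}$ is attained by infinitely many $x_n$, then $\{x\in\Omega:u(x)=t^{\ast}\}$ is a locally finite graph in $\Omega$ with infinitely many vertices, each of degree $2(m+1)\geq 4$, whose ``ends'' are forced onto $\partial\Omega$ by Lemma~\ref{le2.2}, and a counting argument on this graph again produces infinitely many boundary extrema. In either case $\psi_1$ has infinitely many local maxima or minima on $\gamma_I$, or $\psi_2$ has infinitely many local maxima or minima on $\gamma_E$; but $\psi_1,\psi_2$ have only $N_1,N_2$ local maxima by hypothesis and, applying the same reasoning to $-u$ (a solution of an elliptic equation of the same class with boundary data $-\psi_1,-\psi_2$), only finitely many local minima as well. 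This contradiction proves that $u$ has only finitely many interior critical points.
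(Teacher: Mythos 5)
Your proof follows essentially the same route as the paper's: a contradiction argument resting on the Hartman--Wintner isolation of critical points, Lemmas \ref{le2.1} and \ref{le2.2} to force the clustering components onto $\gamma_I$ or $\gamma_E$, and the strong maximum principle to convert those components into boundary local extrema that overwhelm the hypothesis that $\psi_1,\psi_2$ have only $N_1,N_2$ local maxima. The additional care you take to argue that the resulting boundary extrema are genuinely \emph{distinct} (via the dichotomy on the critical values) fills in a step the paper's one-paragraph proof leaves implicit, but it does not change the underlying argument.
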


\begin{Lemma}\label{le2.4}
Suppose that $z_1<Z_1\leq z_2<Z_2$ and $u$ is a non-constant solution to (\ref{1.2}). Then there does not exist any interior critical point $x$ such that $u(x)=t$ for any $t\in [Z_1,z_2].$
\begin{proof}[Proof] We divide the proof into two cases.

(i) Case 1: When $Z_1=z_2$, without loss of generality, we suppose by contradiction that there exists an interior critical point $x_0$ such that $u(x_0)=Z_1=z_2$ and that the multiplicity of $x_0$ is one. By Lemma \ref{le2.1} and Lemma \ref{le2.2}, one of the distributions for $x_0$ and the corresponding level lines of $\{x\in\Omega : u(x)=z_2\}$ is show in Fig. 1.

\begin{center}
  \includegraphics[width=16.0cm,height=3.2cm]{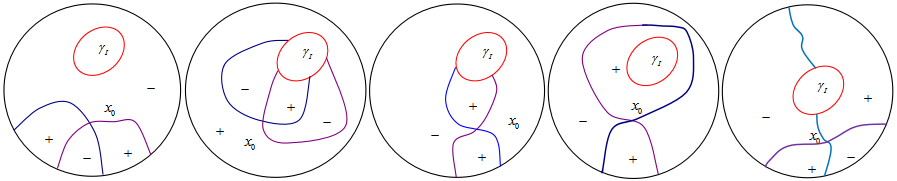}\\
  \scriptsize {\bf Fig. 1.}~~The distributions for $x_0$ and the corresponding level lines of $\{x\in\Omega : u(x)=z_2\}$.
\end{center}
Any one of the distributions of Fig. 1 contradicts with $Z_1=z_2<Z_2$ and the continuity of $u$.

(ii) Case 2: When $Z_1<z_2$, without loss of generality, we suppose by contradiction that there exists an interior critical point $x_0$ such that $u(x_0)=t_0$ for $t_0=Z_1$ or $t_0\in (Z_1,z_2)$ or $t_0=z_2$ and that the multiplicity of $x_0$ is one. The proof is same as Case 1.
\end{proof}
\end{Lemma}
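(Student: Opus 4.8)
The plan is to argue by contradiction, exactly as the statement's hypothesis suggests, and to reduce everything to the ``barrier'' already built into Lemma \ref{le2.2}. First I would observe that, since interior critical points are isolated (Hartman--Wintner) and their multiplicities are finite, it suffices to rule out a critical point $x_0$ of multiplicity one with $u(x_0) = t_0$ for $t_0 \in [Z_1, z_2]$; a critical point of higher multiplicity would, by Lemma \ref{le2.1}, produce even more local nodal components and the contradiction would only be reinforced. So assume such an $x_0$ exists.

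The key step is to apply Lemma \ref{le2.1} to $x_0$: there are $m+1 = 2$ distinct connected components $\omega_1, \omega_2$ of $\{x \in \Omega : u(x) > t_0\}$ and two distinct connected components $\omega_1', \omega_2'$ of $\{x \in \Omega : u(x) < t_0\}$ clustering around $x_0$. Now I split on where $t_0$ sits. If $t_0 = Z_1$, the sub-level components $\omega_1', \omega_2'$ lie (locally near $x_0$, hence on their whole connected component by Lemma \ref{le2.2}(2), since $t_0 \le z_2$ forces $t_0 < z_2$ unless $Z_1 = z_2$) inside $\{u < t_0\}$ with $t_0 \le z_2$... here I must be a little careful: Lemma \ref{le2.2}(2) applies to $t \in (z_1, Z_1)$, so for the endpoint $t_0 = Z_1$ I would instead slightly perturb, choosing $t_0 - \varepsilon \in (z_1, Z_1)$ and noting each $\omega_i'$ contains a component of $\{u < t_0 - \varepsilon\}$, which by Lemma \ref{le2.2}(2) must meet $\gamma_I$. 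Symmetrically, if $t_0 = z_2$ I perturb upward, $t_0 + \varepsilon \in (z_2, Z_2)$, and each super-level component $\omega_i$ must meet $\gamma_E$ by Lemma \ref{le2.2}(1). If $t_0 \in (Z_1, z_2)$ (only possible when $Z_1 < z_2$), then \emph{both} the super-level components must meet $\gamma_E$ and \emph{both} the sub-level components must meet $\gamma_I$, again by Lemma \ref{le2.2}(1)--(2) after an arbitrarily small perturbation in the appropriate direction.

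With this in hand the contradiction is geometric and topological, and this is the main obstacle to make rigorous: one has a doubly connected planar domain $\Omega$, and two disjoint arcs of $\{u = t_0\}$ emanating from $x_0$ that separate a neighborhood of $x_0$ into the four sectors $\omega_1, \omega_1', \omega_2, \omega_2'$ in cyclic order. On $\gamma_I$ one has $u = \psi_1 \le Z_1 \le t_0$, so $\gamma_I$ is contained in $\overline{\{u \le t_0\}}$ and in fact, away from the finitely many points where $\psi_1 = Z_1 = t_0$, strictly in $\{u < t_0\}$; hence $\gamma_I$ sits entirely in the closure of the sub-level set. Likewise $\gamma_E$ meets $\{u > t_0\}$ only where $\psi_2 > t_0$. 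The two super-level sectors $\omega_1, \omega_2$ at $x_0$ both reach $\gamma_E$ without ever touching $\gamma_I$ (since $u < t_0$ near $\gamma_I$), so the curve $\{u = t_0\}$ through $x_0$, together with $\gamma_E$, would have to enclose a region; tracking the level line $\{u=t_0\}$ and using that it cannot terminate in the interior except at critical points (all isolated) and cannot meet $\gamma_I$, one forces an extra local extremum of $u$ on $\gamma_E$ or $\gamma_I$ beyond those counted, or else a closed level loop bounding a region on which the maximum principle is violated. This is precisely the content illustrated by Fig.~1: each possible way the two level arcs through $x_0$ can be completed contradicts either $Z_1 \le z_2 < Z_2$ (a super-level component would be forced to be compactly contained in $\Omega$, impossible by the strong maximum principle) or the continuity of $u$ (the arc would have to cross a region where $u$ is pinned between incompatible values). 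I would carry this out by a short case analysis on the combinatorics of how the four sectors attach to $\gamma_I$ versus $\gamma_E$, concluding in each case with the strong maximum principle, mirroring Case 1 for the endpoint case $Z_1 = z_2$ and quoting ``the proof is the same'' for $Z_1 < z_2$.
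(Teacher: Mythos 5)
Your proposal follows essentially the same route as the paper: argue by contradiction, reduce to a multiplicity-one critical point, use Lemma \ref{le2.1} to produce alternating super-/sub-level sectors at $x_0$, force the super-level (resp.\ sub-level) components onto $\gamma_E$ (resp.\ $\gamma_I$) via Lemma \ref{le2.2} and the strong maximum principle, and then derive a topological contradiction in the doubly connected domain. Your perturbation handling of the endpoint values $t_0=Z_1$ and $t_0=z_2$ actually supplies a detail the paper glosses over, and the concluding separation/maximum-principle step that you leave semi-sketched is precisely the step the paper itself disposes of only by pointing to Fig.~1.
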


\begin{Lemma}\label{le2.5}
 Let $z_1<Z_1\leq z_2<Z_2$ and $x_1,x_2,\cdots,x_k$ be the interior critical points of $u$ in $\Omega$. Suppose that $z_1<u(x_{l+1})=\cdots=u(x_k)=t_0<Z_1\leq z_2<u(x_1)=\cdots=u(x_l)=t_1<Z_2$ and that $x_1,\cdots,x_l$ and $x_{l+1},\cdots,x_k$ together with the corresponding level lines of $\{x\in\Omega : u(x)=t_1\}$ and $\{x\in\Omega : u(x)=t_0\}$ clustering round these points form $q_1,q_0$ connected sets respectively, where $q_1,q_0\geq 1$ and $m_1,m_2,\cdots,m_k$ are the multiplicities of critical points $x_1,x_2,\cdots,x_k$ respectively.

Case 1: Suppose that there exists a non-simply connected component $\omega$ of $\{x\in \Omega : u(x)<t_1\}$ and the external boundary $\gamma$ of $\omega$ is a simply closed curve between $\gamma_I$ and $\gamma_E$ such that $u$ has at least one critical point on $\gamma$, then
 \begin{equation}\label{2.1}\begin{array}{l}
\sharp\Big\{\mbox{the\ simply\ connected\ components\ $\widetilde{\omega}$ of\ the\ sub-level\ set} ~ \{x\in\Omega : u(x)<t_1\}\\~~~ \mbox{such\ that\ $\widetilde{\omega}$ meet\ the\ external\ boundary\ $\gamma_E$} \Big\}= \sum\limits_{i = 1}^l {{m_i}}+q_1-1.
\end{array}\end{equation}

Case 2: Suppose that there exists a non-simply connected component $\omega$ of $\{x\in \Omega : u(x)<t_1\}$ such that $\omega$ meets $\gamma_E$. In addition, we set $M_1$ and $M_2$ as the number of the connected components of the super-level set $\{x\in\Omega : u(x)>t_1\}$ and the sub-level set $\{x\in\Omega : u(x)<t_1\}$, respectively. Then
\begin{equation}\label{2.2}\begin{array}{l}
M_1\geq \sum\limits_{i = 1}^l {{m_i}}+1,~M_2\geq \sum\limits_{i = 1}^l {{m_i}}+1, ~\mbox{and}~M_1+M_2=2\sum\limits_{i = 1}^l {{m_i}}+q_1+1.
\end{array}\end{equation}

Case 3: Suppose that there exists a simply closed curve $\gamma$ of $\{x\in \Omega : u(x)=t_0\}$ between $\gamma_I$ and $\gamma_E$ such that $u$ has at least one critical point on $\gamma$, then
 \begin{equation}\label{2.3}\begin{array}{l}
\sharp\Big\{\mbox{the\ simply\ connected\ components\ $\omega$ of\ the\ super-level\ set} ~ \{x\in\Omega : u(x)>t_0\}\\~~~ \mbox{such\ that\ $\omega$ meet\ the\ interior\ boundary\ $\gamma_I$} \Big\}= \sum\limits_{i = l+1}^k {{m_i}}+q_0-1.
\end{array}\end{equation}

Case 4: For $t_0\in (z_1,Z_1)$, if Case 3 does not occur, we set $\widetilde{M}_1$ and $\widetilde{M}_2$ as the number of the connected components of $\{x\in\Omega : t_0<u(x)<z_2\}$ and the sub-level set $\{x\in\Omega : u(x)<t_0\}$, respectively. Then
\begin{equation}\label{2.4}\begin{array}{l}
\widetilde{M}_1\geq \sum\limits_{i = l+1}^k {{m_i}}+1,~\widetilde{M}_2\geq \sum\limits_{i = l+1}^k {{m_i}}+1, ~\mbox{and}~\widetilde{M}_1+\widetilde{M}_2=2\sum\limits_{i = l+1}^k {{m_i}}+q_0+1.
\end{array}\end{equation}
\end{Lemma}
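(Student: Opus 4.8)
The plan is to treat the four cases essentially as exercises in the topology of nodal sets in a planar annular domain, using Lemma \ref{le2.1} (the local $(m+1)$-fold branching of level lines at a critical point of multiplicity $m$), Lemma \ref{le2.2}(i) (every component of a super-level set at height $t\in(z_2,Z_2)$ reaches $\gamma_E$, and every component of a sub-level set at height $t\in(z_1,Z_1)$ reaches $\gamma_I$), and the strong maximum principle. The unifying bookkeeping device is an Euler-characteristic / combinatorial count of the connected components of the level set $\{u=t_1\}$ (resp. $\{u=t_0\}$) together with the regions it cuts $\Omega$ into: each of the $l$ critical points on the level $t_1$ contributes $m_i$ extra local branches beyond a non-critical point, so near each such point the level line separates locally $m_i+1$ sectors alternating between $\{u>t_1\}$ and $\{u<t_1\}$; globally these branches organize into $q_1$ connected pieces. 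I would first set up, once and for all, the relation between (a) the number $c_1$ of connected components of $\{u=t_1\}$ that carry critical points, (b) the sum $\sum_{i=1}^l m_i$, (c) the integer $q_1$, and (d) the numbers $M_1,M_2$ of components of the super- and sub-level sets abutting that level. The key local identity is that a connected component of $\{u=t_1\}$ containing critical points of multiplicities $\mu_1,\dots,\mu_p$ and which is itself homeomorphic to a graph bounds, on its two "sides," a total number of incident super/sub-level components equal to $1+\sum_j \mu_j$ when it is "contractible" in the relevant sense and one less when it closes up into a cycle enclosing the inner hole.

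For \textbf{Case 1}, the hypothesis is that a non-simply-connected component $\omega$ of $\{u<t_1\}$ has external boundary $\gamma$ a simple closed curve lying strictly between $\gamma_I$ and $\gamma_E$, carrying at least one critical point. I would argue that $\gamma$ separates $\Omega$ into an "inner" annular region (containing $\gamma_I$) and an "outer" region (containing $\gamma_E$); since $\min_{\gamma_E}\psi_2\ge z_2>t_1$ wait — actually here $t_1>z_2$, so $\gamma_E$ itself need not lie in $\{u>t_1\}$; rather, by Lemma \ref{le2.2}(i)(1) every component of $\{u>t_1\}$ meets $\gamma_E$. The simply connected components $\widetilde\omega$ of $\{u<t_1\}$ meeting $\gamma_E$ are exactly the "pockets" cut off from $\gamma_E$ by arcs of $\{u=t_1\}$; I would count them by walking along $\gamma_E$ and noting that each maximal arc of $\gamma_E\cap\{u<t_1\}$ bounds exactly one such pocket, and then relate the number of these arcs to $\sum_{i=1}^l m_i$ and $q_1$ via the branching structure: each critical point adds $m_i$ to the count, the $q_1$ clusters are joined so one unit is lost per merge beyond the first, and the presence of the enclosing cycle $\gamma$ accounts for the final "$-1$." This yields $\sum_{i=1}^l m_i + q_1 - 1$.

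For \textbf{Case 2}, with $\omega$ non-simply connected but now meeting $\gamma_E$ directly, the two inequalities $M_1\ge \sum m_i+1$ and $M_2\ge\sum m_i+1$ follow by feeding Lemma \ref{le2.1} into the fact that distinct local branches land in distinct global components (one uses that components of $\{u>t_1\}$ all meet $\gamma_E$, so two branches in the "$>$" side that were separated locally cannot be globally rejoined through $\gamma_I$); the exact identity $M_1+M_2=2\sum m_i+q_1+1$ is then the Euler-type balance: summing the local sector counts $\sum(m_i+1)$ over all $l$ critical points, correcting by $-(l-q_1)$ for the identifications that form the $q_1$ clusters, and adding the single contribution of the non-simply-connected $\omega$ (the "$+1$" in place of the "$-1$" of Case 1 is precisely the topological difference between an $\omega$ that encircles the hole via an interior cycle $\gamma$ and one that reaches $\gamma_E$). \textbf{Cases 3 and 4} are the mirror images under $u\mapsto -u$ / swapping the roles of $(\gamma_E,Z)$ with $(\gamma_I,z)$, using Lemma \ref{le2.2}(i)(2) in place of (1), so I would state them as "the same argument applied to $\{u<t_0\}$" — in Case 3 a simple closed curve of $\{u=t_0\}$ between $\gamma_I$ and $\gamma_E$ plays the role of $\gamma$, giving $\sum_{i=l+1}^k m_i+q_0-1$; in Case 4, its absence forces the balance $\widetilde M_1+\widetilde M_2=2\sum_{i=l+1}^k m_i+q_0+1$ for the nested regions $\{t_0<u<z_2\}$ and $\{u<t_0\}$.

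The main obstacle I anticipate is \emph{rigorously} justifying the "distinct local branches $\Rightarrow$ distinct global components" step and the exact count of merges: one must exclude that two arcs of $\{u=t_1\}$ emanating from different critical points (or different branches at one critical point) bound the \emph{same} super-level component by going around the annular hole in a way not captured by the cluster count $q_1$. This is where the hypothesis $z_1<Z_1\le z_2<Z_2$ and Lemma \ref{le2.4} (no interior critical point has value in $[Z_1,z_2]$) are essential: they guarantee the level sets $\{u=t_1\}$ and $\{u=t_0\}$ are "regular enough" and lie in separated value-ranges, so that the topology of $\{u=t_1\}$ near $\gamma_E$ decouples from that of $\{u=t_0\}$ near $\gamma_I$, and the component $\omega$ of $\{u<t_1\}$ is the \emph{unique} one that can wrap the hole. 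I would make this precise by a maximum-principle argument: any component of $\{u<t_1\}$ that does not meet $\gamma_E$ must, by Lemma \ref{le2.2}(i), meet $\gamma_I$, hence contains $\gamma_I$; there is at most one such (it is $\omega$), which pins down the "$\pm 1$" in each identity and closes the count.
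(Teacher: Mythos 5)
Your overall strategy is the same as the paper's: combine the local $(m_i+1)$-fold branching from Lemma \ref{le2.1} with Lemma \ref{le2.2} (every relevant component reaches $\gamma_E$ or $\gamma_I$), and then do bookkeeping over the $q_1$ (resp.\ $q_0$) clusters, with a $\pm1$ adjustment governed by whether a level line wraps around the hole. The difference is organizational: the paper establishes the central count by induction on the number of critical points in a single cluster (Step 1 of each case: adding $x_{n+1}$ to a connected set $B$ contributes $m_{n+1}+1$ new components minus the one shared component whose boundary contains both $x_n$ and $x_{n+1}$), and then handles $q_1\ge 2$ by observing that each additional cluster increases the relevant component count by exactly one. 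You instead posit a single global Euler-type identity. That identity is precisely the content that needs proof, and as stated it is not quite right, which is where the gap lies.

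Concretely: (a) your ``key local identity'' asserts that a component of $\{u=t_1\}$ carrying critical points of multiplicities $\mu_1,\dots,\mu_p$ bounds a \emph{total} of $1+\sum_j\mu_j$ incident super/sub-level components; the correct statement (and what the paper's induction proves) is $1+\sum_j\mu_j$ \emph{per side}, so $M_1\geq\sum m_i+1$ and $M_2\geq\sum m_i+1$ separately, with the total $2\sum m_i+2$ for a single non-wrapping cluster. (b) Your Case 2 balance ``$\sum(m_i+1)$ corrected by $-(l-q_1)$ plus one'' does not reproduce $M_1+M_2=2\sum m_i+q_1+1$: already for $l=1$, $m_1=1$, $q_1=1$ the formula should give $4$, and your expression gives $3$ (or $5$, depending on how one reads ``sector count''). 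The correct $q_1$-dependence is that each cluster beyond the first adds exactly one to $M_1+M_2$ (it subdivides one existing component), not one per side. (c) The step you yourself flag as the main obstacle --- that distinct local branches land in distinct global components, i.e.\ that a cluster cannot ``surround'' a component of the opposite level set --- is exactly what the paper extracts from Lemma \ref{le2.2} at the start of each inductive step, so you have correctly identified the needed ingredient but not supplied the argument. To close the proof you would need to either carry out the induction as the paper does or prove your global identity rigorously with the per-side and per-cluster corrections fixed as above.
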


\begin{proof}[Proof] (i) Case 1: We divide the proof into two steps.

  Step 1: When $q_1=1,$ by induction. When $l=1,$ the result holds by Lemma \ref{le2.1} and Lemma \ref{le2.2}. For $1\leq l\leq n$, we assume that the connected set, which consists of $l$ critical points and the connected components clustering round these points, contains exactly $\sum_{i = 1}^l {{m_i}}$ components $\omega$ of the sub-level set $\{x\in\Omega : u(x)<t_1\}$ such that $\omega$ meet $\gamma_E$. Let $l=n+1.$ Let $A$ be the set which consists of the points $x_1,x_2,\cdots,x_{n+1}$ together with the respective components clustering round these points. We may assume that the points $x_1,x_2,\cdots,x_n$ together with the respective components clustering round these points form a connected set, denotes by $B.$ By Lemma \ref{le2.2}, we know that $A$ cannot surround a component of $\{x\in\Omega : u(x)>t_1\}.$  Up to renumbering, therefore there is only one component of $\{x\in\Omega : u(x)<t_1\}$ whose boundary $\alpha$ contains both $x_n$ and $x_{n+1}$. The distribution for the level lines of $\{x\in\Omega : u(x)=t_1\}$ is shown in Fig. 2.
\begin{center}
  \includegraphics[width=6cm,height=3.6cm]{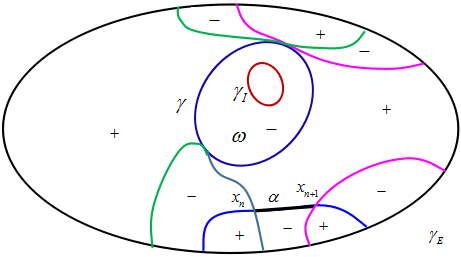}\\
  \scriptsize {\bf Fig. 2.}~~The distribution for the level lines of $\{x\in\Omega : u(x)=t_1\}$.
\end{center}
 Noting that both $A$ and $B$ are connected, by using Lemma \ref{le2.1} and the inductive assumption to $B,$ we know that $A$ contains exactly
 $$ \sum\limits_{i = 1}^n {{m_i}}+(m_{n+1}+1)-1=\sum\limits_{i = 1}^{n+1} {{m_i}}$$
connected components $\widetilde{\omega}$ of the sub-level set $\{x\in\Omega : u(x)<t_1\}$ such that $\widetilde{\omega}$ meet $\gamma_E$. This completes the proof of step 1.

Step 2: When $q_1\geq 2.$ We easily know that the number of connected sets of the level lines $\{x\in\Omega : u(x)=t_1\}$ together with $x_1,\cdots,x_l$ increases one leading the number of connected components of $\{x\in\Omega : u(x)<t_1\}$ increases one. If all the critical points $x_1,x_2,\cdots,x_l$ together with the level lines $\{x\in\Omega : u(x)=t_1\}$ clustering round these points form $q_1$ connected sets. By the results of step 1, then we have
\begin{equation*}\begin{array}{l}
 \sharp\Big\{\mbox{the\ simply\ connected\ components\ $\widetilde{\omega}$ of\ the\ sub-level\ set} ~ \{x\in\Omega : u(x)<t_1\}\\~~~ \mbox{such\ that\ $\widetilde{\omega}$ meet\ the\ external\ boundary\ $\gamma_E$} \Big\}= \sum\limits_{i = 1}^l {{m_i}}+(q_1-1).
\end{array}\end{equation*}
This completes the proof of Case 1.

(ii) Case 2: We divide the proof of Case 2 into two steps.

  Step 1: When $q_1=1$, by induction. When $l=1,$ the result holds by Lemma \ref{le2.1} and Lemma \ref{le2.2}. For $1\leq l\leq n$, we assume that the connected set, which consists of $l$ critical points and the connected components clustering round these points, contains exactly $\sum\limits_{i = 1}^l {{m_i}}+1$ components of the super-level set $\{x\in\Omega : u(x)>t_1\}$. Let $l=n+1.$ Let $A$ be the set which consists of the points $x_1,x_2,\cdots,x_{n+1}$ together with the respective components clustering round these points. We may assume that the points $x_1,x_2,\cdots,x_n$ together with the respective components clustering round these points form a connected set, denotes by $B.$ By Lemma \ref{le2.2}, we know that $A$ cannot surround a component of $\{x\in\Omega : u(x)<t_1\}.$  Up to renumbering, therefore there is only one component of $\{x\in\Omega : u(x)>t_1\}$ whose boundary $\gamma$ contains both $x_n$ and $x_{n+1}$. The distribution for the level lines of $\{x\in\Omega : u(x)=t_1\}$ is shown in Fig. 3.
\begin{center}
  \includegraphics[width=6cm,height=3.6cm]{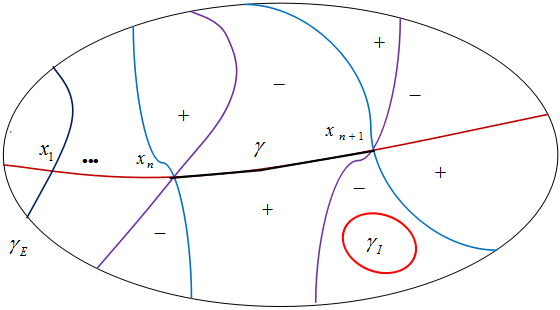}\\
  \scriptsize {\bf Fig. 3.}~~The distribution for the level lines of $\{x\in\Omega : u(x)=t_1\}$.
\end{center}
 Noting that both $A$ and $B$ are connected, by using Lemma \ref{le2.1} and the inductive assumption to $B,$ we know that $A$ contains exactly
 $$ \Big(\sum\limits_{i = 1}^n {{m_i}}+1\Big)+(m_{n+1}+1)-1=\sum\limits_{i = 1}^{n+1} {{m_i}}+1$$
connected components of the super-level set $\{x\in\Omega : u(x)>t_1\}$.

Step 2: When $q_1\geq 2.$  We know that the number of connected sets of the level lines $\{x\in \Omega : u(x)=t_1\}$ together with $x_1,\cdots,x_l$ increases one leading the number of connected components of $\{x\in\Omega : u(x)>t_1\}$ or $\{x\in\Omega : u(x)<t_1\}$ increases one. If the critical points $x_1,x_2,\cdots,x_l$ together with the level lines of $\{x\in \Omega : u(x)=t_1\}$ clustering round these points form $q_1$ connected sets. By the results of step 1, then we have
\begin{equation*}\begin{array}{l}
M_1\geq \sum\limits_{i = 1}^l {{m_i}}+1,~~~M_2\geq \sum\limits_{i = 1}^l {{m_i}}+1,
\end{array}\end{equation*}
and
\begin{equation*}\begin{array}{l}
M_1+M_2=2(\sum\limits_{i = 1}^l {{m_i}}+1)+(q_1-1)=2\sum\limits_{i = 1}^l {{m_i}}+q_1+1.
\end{array}\end{equation*}
This completes the proof of Case 2.

(iii) Case 3: We divide the proof of Case 3 into two steps. For convenience, up to renumbering, we denote $x_{l+1},\cdots,x_k$ and $m_{l+1},\cdots,m_k$ by $y_{1},\cdots,y_{k-l}$ and $\widetilde{m}_{1},\cdots,\widetilde{m}_{k-l}$ respectively.

Step 1: When $q_0=1,$ by induction. When $k-l=1,$ the result holds by Lemma \ref{le2.1} and Lemma \ref{le2.2}. For $1\leq k-l\leq n$, we assume that the connected set, which consists of $k-l$ critical points and the connected components clustering round these points, contains exactly $\sum_{i =1}^{k-l} {{\widetilde{m}_i}}$ components $\omega$ of the super-level set $\{x\in\Omega : u(x)>t_0\}$ such that $\omega$ meet the interior boundary $\gamma_I$. Let $k-l=n+1.$ Let $A$ be the set which consists of the points $y_1,\cdots,y_{n+1}$ together with the respective components clustering round these points. We may assume that the points $y_{1},\cdots,y_{n}$ together with the respective components clustering round these points form a connected set, denotes by $B.$ By Lemma \ref{le2.2} we know that $A$ cannot surround a component of $\{x\in\Omega : u(x)<t_0\}.$  Up to renumbering, therefore there is only one component of $\{x\in\Omega : u(x)>t_0\}$ whose boundary $\alpha$ contains both $y_{n}$ and $y_{n+1}$. The distribution for the level lines of $\{x\in\Omega : u(x)=t_0\}$ is shown in Fig. 4.
\begin{center}
  \includegraphics[width=6cm,height=3.6cm]{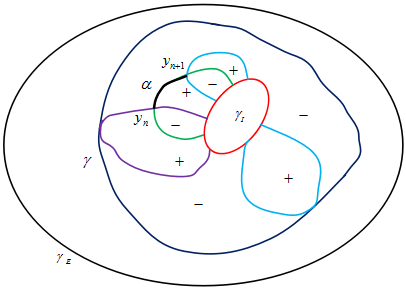}\\
  \scriptsize {\bf Fig. 4.}~~The distribution for the level lines of $\{x\in\Omega : u(x)=t_0\}$.
\end{center}
 Noting that both $A$ and $B$ are connected, by using Lemma \ref{le2.1} and the inductive assumption to $B,$ we know that $A$ contains exactly
 $$ \sum_{i = 1}^{n} {\widetilde{m}_i}+(\widetilde{m}_{n+1}+1)-1=\sum_{i = 1}^{n+1} {\widetilde{m}_i}=\sum\limits_{i = l+1}^{k} {{m_i}}$$
connected components $\omega$ of the super-level set $\{x\in\Omega : u(x)>t_0\}$ such that $\omega$ meet $\gamma_I$. This completes the proof of step 1.

Step 2: When $q_0\geq 2.$ Since the number of connected sets of the level lines $\{x\in\Omega : u(x)=t_0\}$ together with $x_{l+1},\cdots,x_k$ increases one leading the number of connected components of $\{x\in\Omega : u(x)>t_0\}$ increases one. If all the critical points $x_{l+1},\cdots,x_k$ together with the level lines $\{x\in\Omega : u(x)=t_0\}$ clustering round these points form $q_0$ connected sets. By the results of step 1, then we have
\begin{equation*}\begin{array}{l}
 \sharp\Big\{\mbox{the\ simply\ connected\ components\ $\omega$ of\ the\ super-level\ set} ~ \{x\in\Omega : u(x)>t_0\}\\~~~ \mbox{such\ that\ $\omega$ meet\ the\ interior\ boundary\ $\gamma_I$} \Big\}= \sum\limits_{i = l+1}^k {{m_i}}+(q_0-1).
\end{array}\end{equation*}
This completes the proof of Case 3.

(iv) Case 4: Case 2 implies Case 4.
\end{proof}

 \begin{Remark}\label{re2.6} Note that if $z_1<u(x_{l+1})=\cdots=u(x_k)=t_0<Z_1\leq z_2<u(x_1)=\cdots=u(x_l)=t_1<Z_2$. Suppose that there exists a non-simply connected component $\omega$ of $\{x\in \Omega : u(x)<t_1\}$ for critical value $t_1$, where the external boundary $\gamma_1$ of $\omega$ is a simply closed curve in $\Omega$ as in Case 1 of Lemma \ref{le2.5} or that there exists a simply closed curve $\gamma_0$ of $\{x\in \Omega : u(x)=t_0\}$ between $\gamma_I$ and $\gamma_E$ as in Case 3 of Lemma \ref{le2.5}, then $u$ has at least one critical point on $\gamma_1$ and $\gamma_0$ respectively. For the sake of clarity, we give the proof of the first situation. In fact, suppose by contradiction that $u$ has no critical point on $\gamma_1$. Without loss of generality, we may assume that $u$ has only two local maximal points $q_1, q_2$ on $\gamma_E$ and one critical point $x_1$ in $\Omega\setminus \overline{\omega}$ such that $u(x_1)=t_1$ and the multiplicity of $x_1$ is one, we denote the non-simply connected component of $\{x\in\Omega: u(x)>t_1\}$ by $A$. The distribution for the level lines of $\{x\in\Omega: u(x)=t_1\}$ is shown in Fig. 5.
 \begin{center}
  \includegraphics[width=6.0cm,height=4.1cm]{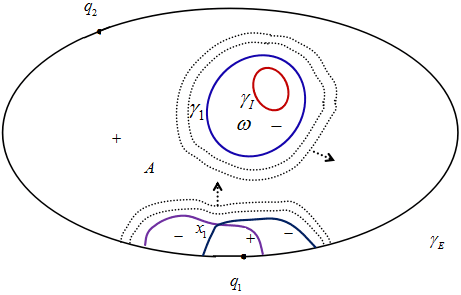}\\
  \scriptsize {\bf Fig. 5.}~~The distribution for the level lines of $\{x\in\Omega : u(x)=t_1\}$.
\end{center}
 By using the method of step 3 of the proof of Theorem 1.2 in \cite{Deng3}, this would imply that either: $u(x)=u(q_2)$ in interior points of $A$, or: there exist two level lines intersecting in $A$, i.e., there exist critical points in $A$. This is a contradiction.
\end{Remark}

\begin{Lemma}\label{le2.7}
Let $z_1<z_2<Z_1<Z_2$ and $x_1,x_2,\cdots,x_k$ be the interior critical points of $u$ in $\Omega$ and $m_1,m_2,\cdots,m_k$ be the multiplicities of critical points $x_1,x_2,\cdots,x_k$ respectively.\\
{\bf Situation 1}: Suppose that $z_1<u(x_{l+1})=\cdots=u(x_k)=t_0\leq z_2< Z_1\leq u(x_1)=\cdots=u(x_l)=t_1<Z_2$ or $z_2<u(x_{l+1})=\cdots=u(x_k)=t_0< Z_1\leq u(x_1)=\cdots=u(x_l)=t_1<Z_2$ or $z_1<u(x_{l+1})=\cdots=u(x_k)=t_0\leq z_2< u(x_1)=\cdots=u(x_l)=t_1<Z_1$ and that $x_1,\cdots,x_l$ and $x_{l+1},\cdots,x_k$ together with the corresponding level lines of $\{x\in\Omega : u(x)=t_1\}$ and $\{x\in\Omega : u(x)=t_0\}$ clustering round these points meet $\gamma_E,\gamma_I$ and form $q_1,q_0$ connected sets respectively, where $q_1,q_0\geq 1.$

Case 1: If there exists a simply closed curve $\gamma_1$ of $\{x\in \Omega : u(x)=t_1\}$ between $\gamma_I$ and $\gamma_E$ such that $u$ has at least one critical point on $\gamma_1$ and there exists a simply closed curve $\gamma_0$ of $\{x\in \Omega : u(x)=t_0\}$ between $\gamma_I$ and $\gamma_1$ such that $u$ has at least one critical point on $\gamma_0$ {\bf or} there exists totally one simply closed curve $\gamma$ of $\{x\in \Omega : u(x)=t\}$ for two critical values between $\gamma_I$ and $\gamma_E$ such that $u$ has at least one critical point on $\gamma.$ Then
 \begin{equation}\label{2.5}\begin{array}{l}
\sharp\Big\{\mbox{the\ simply\ connected\ components\ $\omega$ of\ the\ sub-level\ set} ~ \{x\in\Omega : u(x)<t_1\}\\~~~ \mbox{such\ that\ $\omega$ meet\ the\ external\ boundary\ $\gamma_E$} \Big\}\geq \sum\limits_{i = 1}^l {{m_i}}+q_1-1,
\end{array}\end{equation}
and
\begin{equation}\label{2.6}\begin{array}{l}
\sharp\Big\{\mbox{the\ simply\ connected\ components\ $\omega$ of\ the\ super-level\ set} ~ \{x\in\Omega : u(x)>t_0\}\\~~~ \mbox{such\ that\ $\omega$ meet\ the\ interior\ boundary\ $\gamma_I$} \Big\}\geq \sum\limits_{i = l+1}^k {{m_i}}+q_0-1.
\end{array}\end{equation}

Case 2: If there does not exist a simply closed curve $\gamma$ of $\{x\in \Omega : u(x)=t\}$ for any critical value $t$ between $\gamma_I$ and $\gamma_E$, then
 \begin{equation}\label{2.7}\begin{array}{l}
\sharp\Big\{\mbox{the\ simply\ connected\ components\ $\omega$ of\ the\ super-level\ set} ~ \{x\in\Omega : u(x)>t_1\}\\~~~ \mbox{such\ that\ $\omega$ meet\ the\ external\ boundary\ $\gamma_E$} \Big\}\geq\sum\limits_{i = 1}^l {{m_i}}+1,
\end{array}\end{equation}
and
 \begin{equation}\label{2.8}\begin{array}{l}
\sharp\Big\{\mbox{the\ simply\ connected\ components\ $\omega$ of\ the\ sub-level\ set} ~ \{x\in\Omega : u(x)<t_0\}\\~~~ \mbox{such\ that\ $\omega$ meet\ the\ external\ boundary\ $\gamma_I$} \Big\}\geq\sum\limits_{i = l+1}^k {{m_i}}+1.
\end{array}\end{equation}
{\bf Situation 2}: Suppose that $z_2<u(x_1)=u(x_2)=\cdots=u(x_k)=t<Z_1$ and that $x_1,\cdots,x_k$ together with the corresponding level lines of $\{x\in\Omega : u(x)=t\}$ clustering round these points form $q$ connected sets, where $q\geq 1.$ In addition, we set $M_1$ and $M_2$ as the number of the connected components of $\{x\in\Omega : u(x)>t\}$ and $\{x\in\Omega : u(x)<t\}$, respectively.

Case 3:  If there exists a simply closed curve $\gamma$ of $\{x\in \Omega : u(x)=t\}$ between $\gamma_I$ and $\gamma_E$ such that $u$ has at least two critical points on $\gamma$, then
 \begin{equation}\label{2.9}\begin{array}{l}
M_1\geq \sum\limits_{i = 1}^k {{m_i}},~M_2\geq \sum\limits_{i = 1}^k {{m_i}}, ~\mbox{and}~M_1+M_2=2\sum\limits_{i = 1}^k {{m_i}}+q-1,
\end{array}\end{equation}

Case 4: If there does not exist a simply closed curve $\gamma$ of $\{x\in \Omega : u(x)=t\}$ for critical value $t$ between $\gamma_I$ and $\gamma_E$, then
\begin{equation}\label{2.10}\begin{array}{l}
M_1\geq \sum\limits_{i = 1}^k {{m_i}}+1,~M_2\geq \sum\limits_{i = 1}^k {{m_i}}+1, ~\mbox{and}~M_1+M_2=2\sum\limits_{i = 1}^k {{m_i}}+q+1.
\end{array}\end{equation}
\end{Lemma}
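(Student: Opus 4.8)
The plan is to run the same inductive scheme as in Lemma~\ref{le2.5}, adapted to the ordering $z_1<z_2<Z_1<Z_2$, using Lemma~\ref{le2.1} for the local shape of the level line at each critical point and Lemma~\ref{le2.2}(3)--(5) to decide which boundary component a given super- or sub-level component must meet. The organizing dichotomy is whether the level set of a critical value contains a simply closed curve separating $\gamma_I$ from $\gamma_E$. If it does (Cases~1 and~3), that curve cuts $\Omega$ into two pieces, each topologically like a simply connected domain, on which the counting of Lemma~\ref{le2.5} (and of \cite{Alessandrini}) applies; if it does not (Cases~2 and~4), the annular topology of $\Omega$ persists and forces one additional component on each of the super- and sub-level sides, which is the source of the ``$+1$''s in \eqref{2.7}, \eqref{2.8} and \eqref{2.10}.

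For Situation~1 I would handle the two critical values separately: $t_1$ against $\gamma_E$ for \eqref{2.5} and \eqref{2.7}, and $t_0$ against $\gamma_I$ for \eqref{2.6} and \eqref{2.8}. In Case~1, restrict to the subdomain of $\Omega$ bounded by the separating curve $\gamma_1$ (or the common curve $\gamma$) on the inside and $\gamma_E$ on the outside; there the set $\{u<t_1\}$ behaves as in a simply connected domain, so the induction of Case~1 of Lemma~\ref{le2.5} goes through --- base case $l=1$ from Lemma~\ref{le2.1}, the inductive step replacing $\sum_{i=1}^n m_i$ by $\sum_{i=1}^n m_i+(m_{n+1}+1)-1$ when $x_{n+1}$ is peeled off, and each additional connected block raising $q_1$ and the count by one. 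The conclusion is ``$\geq$'' rather than the equality of Lemma~\ref{le2.5} because the critical points at level $t_0$ sit in the range $(z_2,Z_1)$ where Lemma~\ref{le2.2}(4) permits a component to touch either boundary, so the $t_0$-level lines may spawn further simply connected components of $\{u<t_1\}$ meeting $\gamma_E$ that the induction does not see. Estimate \eqref{2.6} is the mirror statement for $\{u>t_0\}$ and $\gamma_I$, using the curve $\gamma_0$ (or $\gamma$) and Lemma~\ref{le2.2}(5). In Case~2, the absence of any separating curve means the relevant simply connected components of $\{u>t_1\}$ (resp.\ $\{u<t_0\}$) must all reach $\gamma_E$ (resp.\ $\gamma_I$), and the Case~2 induction of Lemma~\ref{le2.5}, which manufactures the extra ``$+1$'' from the hole at $\gamma_I$, gives \eqref{2.7} and \eqref{2.8}.

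For Situation~2 there is a single critical value $t\in(z_2,Z_1)$, so the bookkeeping is cleaner. In Case~3, fix the separating curve $\gamma$ carrying at least two critical points; cutting along $\gamma$ reduces the problem to two simply-connected-type tallies that overlap by one along $\gamma$, yielding $M_1+M_2=2\sum_{i=1}^k m_i+q-1$, while $M_1,M_2\geq\sum_{i=1}^k m_i$ follows from applying Lemma~\ref{le2.1} block by block. In Case~4 the block-by-block induction of Case~2 of Lemma~\ref{le2.5} applies essentially verbatim and gives $M_1,M_2\geq\sum_{i=1}^k m_i+1$ and $M_1+M_2=2\sum_{i=1}^k m_i+q+1$; the difference of $2$ between \eqref{2.9} and \eqref{2.10} is exactly the two component slots, one super-level and one sub-level, that a separating closed level curve absorbs --- which is also why Case~3 needs two critical points on $\gamma$ whereas Case~1 needs only one per curve.

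The main obstacle is the topological bookkeeping in the annular domain: one must show that a super- or sub-level component failing to reach $\gamma_E$ necessarily encircles $\gamma_I$, hence produces a separating closed level curve on which a critical point must sit --- this is where Remark~\ref{re2.6} and the argument of step~3 of the proof of Theorem~1.2 in \cite{Deng3} enter --- and one must check that in the sub-cases of Situation~1 with $t_1<Z_1$ the portion of $\gamma_I$ where $u>t_1$ does not invalidate the claim that outer components of $\{u<t_1\}$ reach $\gamma_E$. Keeping control of the count across the range $(z_2,Z_1)$ of Lemma~\ref{le2.2}(4), where components may run to either boundary, is the delicate point, and it is precisely this freedom that turns the equalities of Lemma~\ref{le2.5} into the inequalities of Situation~1.
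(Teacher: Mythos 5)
Your proposal follows essentially the same route as the paper, which disposes of this lemma in a single sentence by asserting that Cases 1--4 of Lemma \ref{le2.5} imply Cases 1--4 here respectively. Your elaboration --- adapting the induction of Lemma \ref{le2.5}, organizing the cases by the presence or absence of a separating closed level curve between $\gamma_I$ and $\gamma_E$, and locating the source of the inequalities in the ambiguous range $(z_2,Z_1)$ of Lemma \ref{le2.2}(4) --- is consistent with, and considerably more explicit than, the argument the paper actually records.
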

\begin{proof}[Proof] According to the fine analysis about the distributions of connected components of the super-level sets $\{x\in \Omega: u(x)>t\}$ and sub-level sets $\{x\in \Omega: u(x)<t\}$ for some $t$, we can easily know that the Case 1, 2, 3, 4 of Lemma \ref{le2.5} implies Case 1, 2, 3, 4 respectively.
\end{proof}

\section{Proof of Theorem \ref{th1.1}}
 ~~~~In this section, we only give out the proofs of these two cases $z_1<Z_1\leq z_2<Z_2$ and $z_1<z_2<Z_1 <Z_2$. It can be easily observed from the proofs that other cases can be dealt with in a similar way.

\subsection{Proof for the case of $z_1<Z_1\leq z_2<Z_2$}

 ~~~~~In this subsection, we suppose that $x_1,x_2,\cdots,x_k$ are the interior critical points of $u$ in $\Omega$ and $z_1<u(x_{l+1}),\cdots,u(x_k)<Z_1\leq z_2<u(x_1),\cdots,u(x_l)<Z_2$. From the preparations in Section 2, we are now ready to present the proof of Theorem \ref{th1.1} for the case of $z_1<Z_1\leq z_2<Z_2$.
\begin{proof}[Proof of Theorem \ref{th1.1} for the case of $z_1<Z_1\leq z_2<Z_2$]  (i) Case $A_1$: If $z_1<u(x_{l+1})=\cdots=u(x_k)=t_0<Z_1\leq z_2<u(x_1)=\cdots=u(x_l)=t_1<Z_2$, by the results of Lemma \ref{le2.5}, we know that
\begin{equation*}\begin{array}{l}
 \sharp\Big\{\mbox{the\ simply\ connected\ components\ $\omega$ of\ the\ sub-level\ set} ~ \{x\in\Omega : u(x)<t_1\}\\~~~ \mbox{such\ that\ $\omega$ meet\ the\ external\ boundary\ $\gamma_E$} \Big\}\geq \sum\limits_{i = 1}^l {{m_i}},
\end{array}\end{equation*}
and
\begin{equation*}\begin{array}{l}
\sharp\Big\{\mbox{the\ simply\ connected\ components\ $\omega$ of\ the\ super-level\ set} ~ \{x\in\Omega : u(x)>t_0\}\\~~~ \mbox{such\ that\ $\omega$ meet\ the\ interior\ boundary\ $\gamma_I$} \Big\}\geq \sum\limits_{i = l+1}^k {{m_i}}.
\end{array}\end{equation*}
By the strong maximum principle, we have that $u$ exists at least $\sum\limits_{i =1}^l {{m_i}}$ and $\sum\limits_{i =l+1}^k {{m_i}}$ local minimal points and maximal points on $\gamma_E$ and $\gamma_I$ respectively.
Hence, we have
\begin{equation*}\begin{array}{l}
\sum\limits_{i =1}^l {{m_i}}\leq N_2, ~~~\sum\limits_{i = l+1}^k {{m_i}}\leq N_1,
 \end{array}\end{equation*}
then
\begin{equation*}\begin{array}{l}
 \sum\limits_{i = 1}^k {{m_i}}\leq N_1+ N_2.
 \end{array}\end{equation*}

(ii) Case $A_2$: If the values at critical points $x_1,\cdots,x_l$ are not totally equal or the values at critical points $x_{l+1},\cdots,x_k$ are not totally equal. Next we need divide the proof of Case $A_2$ into two situations.

(1) Situation 1: If the values at critical points $x_{1},\cdots,x_l$ are not totally equal, without loss of generality, we may assume that
\begin{equation}\label{3.1}
\begin{split}
z_2<u(x_1)&=\cdots=u(x_{j_1})<u(x_{j_1+1})=\cdots=u(x_{j_2})<\cdots<\cdots\\
&<u(x_{j_{n-1}+1})=\cdots=u(x_{j_{n}}),
\end{split}
\end{equation}
where $x_1,\cdots,x_{j_1},\cdots, x_{j_2},\cdots, x_{j_{n}}$ are different critical points in $\Omega$, $j_{n}=l$ and $n\geq 2.$ Next we put
\begin{equation*}\begin{array}{l}
E_{j}:=\Big\{\omega : \mbox{open\ set}~ \omega ~ \mbox{is\ a\ connected\ component\ of} ~ \{x\in \Omega: u(x)>u(x_j)\}\Big\}~(j=j_1,j_2,\cdots,j_n),
\end{array}\end{equation*}
and
\begin{equation*}\begin{array}{l}
G_{j_n}:=\Big\{\omega : \mbox{open\ set}~ \omega~ \mbox{is\ a\ connected\ component\ of} ~ \{x\in \Omega: u(x)>u(x_{j})\}(j=j_1,j_2,\cdots,j_n)\\~~~~~~~~~~~~ \mbox{such\ that\ there\ does\ not\ exist\ interior\ critical\ point\ in} ~\omega\Big\}.
\end{array}\end{equation*}
By the definition, we know that $G_{j_n}$ consists of disjoint connected components. We set
\[|G_{j_n}|:=\sharp\big\{\omega : \omega~ \mbox{is\ a\ connected\ component\ of} ~G_{j_n}\big\}.\]

To illustrate $G_{j_n}$, let us consider an illustration for $G_{j_2}$. Assume that $u$ has only three critical points $x_{j_1},x_{j_1+1},x_{j_2}$ such that $z_2<u(x_{j_1})<u(x_{j_1+1})=u(x_{j_2})$ in $\Omega$ and the respective multiplicity $m_{j_1}=1,m_{j_1+1}=1,m_{j_2}=2$. The distributions of elements of $G_{j_2}$ is shown in Fig. 6.
\begin{center}
  \includegraphics[width=6.6cm,height=4.0cm]{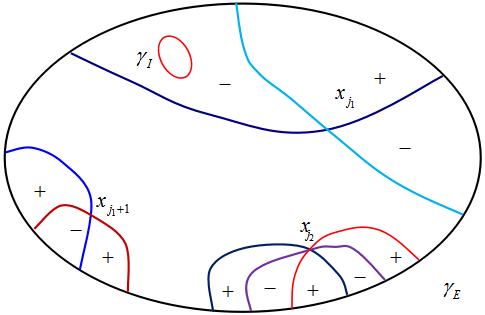}\\
  \scriptsize {\bf Fig. 6.}~~The distributions of elements of $G_{j_2}$.
\end{center}
By the definition of $G_{j_n},$ we know $|G_{j_2}|=6.$

Now let us show that $\big|G_{j_s}\big|\geq \sum_{i = 1}^{{j_s}} m_i+1$ by induction on the number $s.$ When $s=1,$ the result holds by Lemma \ref{le2.2}. Suppose that $\big|G_{j_s}\big|\geq \sum_{i = 1}^{{j_s}} m_i+1$ for $1\leq s\leq n-1.$  Let $s=n.$ Then, by (\ref{3.1}) and the definition of $E_{j}$, we have
\[\{x_{j_{n-1}+1},\cdots,x_{j_{n}}\}\subset \bigcup \limits_{\omega  \in {E_{{j_{n-1}}}}} \omega,\]
where $\omega$ is a connected component of $\{x\in \Omega: u(x)>u(x_{j_{n-1}})\}.$

Let us assume that $\{x_{j_{n-1}+1},\cdots,x_{j_{n}}\}$ are contained in exactly $\widehat{q}$ components $\omega_1,\cdots,\omega_{\widehat{q}}.$ Then $x_{j_{n-1}+1},\cdots,x_{j_{n}}$ together with the corresponding level lines of $\{x\in\Omega: u(x)=u(x_{j_n})\}$ clustering round these points at least form $\widehat{q}$ connected sets. By the Case 2 of Lemma \ref{le2.5}, we have
\begin{equation*}\begin{array}{l}
M:=\sharp\Big\{\mbox{the\ connected\ components\ of}~ \{x\in \Omega: u(x)>u(x_{j_{n}})\}~\mbox{in\ all}~\omega_j~(j=1,\cdots,\widehat{q})\Big\}\\ ~~~~\geq \sum\limits_{i=j_{n-1}+ 1}^{j_{n}} {{m_i}}+\widehat{q}.
\end{array}\end{equation*}
By using the definition of $\big|G_{j_{n}}\big|$ and the inductive assumption to $1\leq s\leq n-1,$ since $\{x_{j_{n-1}+1},\cdots,x_{j_{n}}\}$ are contained in exactly $\widehat{q}$ components $\omega_1,\cdots,\omega_{\widehat{q}},$ so when we calculate the number of $|G_{j_{n}}|$, the number of $|G_{j_{n-1}}|$ will be reduced by $\widehat{q}.$ Then we have
\begin{equation*}\begin{array}{l}
\big|G_{j_{n}}\big|=\big|G_{j_{n-1}}\big|+M-\widehat{q}\geq\big|G_{j_{n-1}}\big|+\Big(\sum\limits_{i=j_{n-1}+ 1}^{j_{n}} {{m_i}}+\widehat{q}\Big)-\widehat{q}\geq\sum\limits_{i=1}^{j_{n}} {{m_i}}+1.
\end{array}\end{equation*}
By the strong maximum principle and Lemma \ref{le2.2}, we have that $u$ has at least $\sum_{i = 1}^l {{m_i}}+1$ local maximum points on $\gamma_E.$ Therefore, we obtain
\begin{equation*}\begin{array}{l}
\sum\limits_{i = 1}^l {{m_i}}+1\leq N_2.
\end{array}\end{equation*}

(2) Situation 2: If the values at critical points $x_{l+1},\cdots,x_k$ are not totally equal. For convenience, we denote $x_{l+1},\cdots,x_k$ and $m_{l+1},\cdots,m_k$ by $y_{1},\cdots,y_{k-l}$ and $\widetilde{m}_{1},\cdots,\widetilde{m}_{k-l}$ respectively. The proof is same as the case 2 of the proof of Theorem 1.1 in \cite{Deng3}. Without loss of generality, we may assume that
\begin{equation}\label{3.2}
\begin{split}
u(y_1)&=\cdots=u(y_{j_1})<u(y_{j_1+1})=\cdots=u(y_{j_2})<\cdots<\cdots\\
&<u(y_{j_{n-1}+1})=\cdots=u(y_{j_{n}})<Z_1,
\end{split}
\end{equation}
where $y_1,\cdots,y_{j_1},\cdots, y_{j_2},\cdots, y_{j_{n}}$ are different critical points in $\Omega$, $j_{n}=k-l$ and $n\geq 2.$ Next we put
\begin{equation*}\begin{array}{l}
E_{j}:=\Big\{\omega : \mbox{open\ set}~ \omega ~ \mbox{is\ a\ connected\ component\ of} ~ \{x\in \Omega: u(x)>u(y_j)\}\Big\}~(j=j_1,j_2,\cdots,j_n),
\end{array}\end{equation*}
and
\begin{equation*}\begin{array}{l}
F_{j_n}:=\Big\{\omega : \mbox{open\ set}~ \omega~ \mbox{is\ a\ connected\ component\ of} ~ \{x\in \Omega: u(x)<u(y_{j_1})\}~\mbox{or} ~\omega~ {is\ a} \\ ~~~~~~~~~~~~ \mbox{connected\ component\ of} ~ \{x\in \Omega: u(y_{j_i})<u(x)<u(y_{j_{i+1}})~\mbox{for\ some}~1\leq i\leq n-1 \}\Big\}.
\end{array}\end{equation*}
By the definition, we know that $F_{j_n}$ consists of disjoint connected components. We set
\[|F_{j_n}|:=\sharp\big\{\omega : \omega~ \mbox{is\ a\ connected\ component\ of} ~F_{j_n}\big\}.\]

To illustrate $F_{j_n}$, let us consider an illustration for $F_{j_2}$. Assume that $u$ has only two critical points $y_{j_1},y_{j_2}$ such that $u(y_{j_1})<u(y_{j_2})<Z_1$ in $\Omega$ and the respective multiplicity $\widetilde{m}_{j_1}=1,\widetilde{m}_{j_2}=1$. The distributions of elements of $F_{j_2}$ is shown in Fig. 7.
\begin{center}
  \includegraphics[width=6.6cm,height=3.9cm]{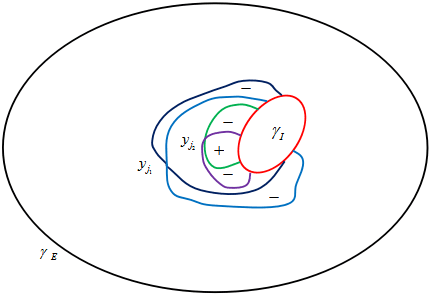}\\
  \scriptsize {\bf Fig. 7.}~~The distributions of elements of $F_{j_2}$.
\end{center}
By the definition of $F_{j_n},$ we know $|F_{j_2}|=4.$

Now let us show that $\big|F_{j_s}\big|\geq \sum_{i = 1}^{{j_s}} \widetilde{m}_i+1$ by induction on the number $s.$ When $s=1,$ the result holds by Lemma \ref{le2.2}. Suppose that $\big|F_{j_s}\big|\geq \sum_{i = 1}^{{j_s}} \widetilde{m}_i+1$ for $1\leq s\leq n-1.$  Let $s=n.$ Then, by (\ref{3.2}) and the definition of $E_{j}$, we have
\[\{y_{j_{n-1}+1},\cdots,y_{j_{n}}\}\subset \bigcup \limits_{\omega  \in {E_{{j_{n-1}}}}} \omega,\]
where $\omega$ is a connected component of $\{x\in \Omega: u(x)>u(y_{j_{n-1}})\}.$

Let us assume that $\{y_{j_{n-1}+1},\cdots,y_{j_{n}}\}$ are contained in exactly $\widetilde{q}$ components $\omega_1,\cdots,\omega_{\widetilde{q}}.$ Then $\{y_{j_{n-1}+1},\cdots,y_{j_{n}}\}$ together with the corresponding level lines of $\{x\in \Omega: u(x)=u(y_{j_{n}})\}$ clustering round these points at least form $\widetilde{q}$ connected sets. By Lemma \ref{le2.2}, we have
\begin{equation*}\begin{array}{l}
\widetilde{M}:=\sharp\Big\{\mbox{the\ connected\ components\ of}~ \{x\in \Omega: u(y_{j_{n-1}})<u(x)<u(y_{j_{n}})\}~\mbox{in\ all}~\omega_j~(j=1,\cdots, \widetilde{q})\Big\}\\ ~~~~\geq \sum\limits_{i=j_{n-1}+ 1}^{j_{n}} {\widetilde{m}_i}+\widetilde{q}.
\end{array}\end{equation*}
By using the definition of $\big|F_{j_{n}}\big|$ and the inductive assumption to $1\leq s\leq n-1,$ then we have
\begin{equation*}\begin{array}{l}
\big|F_{j_{n}}\big|=\big|F_{j_{n-1}}\big|+\widetilde{M} \geq \big|F_{j_{n-1}}\big|+\Big(\sum\limits_{i=j_{n-1}+ 1}^{j_{n}} {\widetilde{m}_i}+\widetilde{q}\Big)-\widetilde{q} \geq\sum\limits_{i=1}^{j_{n}} {{\widetilde{m}_i}}+1.
\end{array}\end{equation*}
By the strong maximum principle, we have that $u$ has at least $\sum_{i = 1}^{k-l} {{\widetilde{m}_i}}+1$ local minimal points on $\gamma_I.$ Therefore, we obtain
\begin{equation*}\begin{array}{l}
\sum\limits_{i = 1}^{k-l} {{\widetilde{m}_i}}+1=\sum\limits_{i = l+1}^{k} {{m_i}}+1\leq N_1.
\end{array}\end{equation*}

According to Case $A_1$, Case $A_2$ and the assumption of
\begin{equation*}\label{3.3}
\begin{split}
z_1<u(x_{l+1}),\cdots,u(x_k)<Z_1\leq z_2<u(x_1),\cdots,u(x_l)<Z_2,
\end{split}
\end{equation*}
then we have
 \[\sum\limits_{i = l+1}^{k} {{m_i}}\leq N_1,~~~\sum\limits_{i=1}^{l} {{m_i}}\leq N_2,\]
that is
\begin{equation}\label{3.3}
\begin{split}
\sum\limits_{i = 1}^{k}{{m_i}}\leq N_1+N_2.
\end{split}
\end{equation}
This completes the proof of Theorem \ref{th1.1} for the case of $z_1<Z_1\leq z_2<Z_2$.
\end{proof}

\subsection{Proof for the case of $z_1<z_2<Z_1<Z_2$}
 ~~~~~In this subsection, we suppose that $x_1,x_2,\cdots,x_k$ are the interior critical points of $u$ in $\Omega$ and $z_1<z_2<Z_1<Z_2$. From the preparations in Section 2, we are now ready to present the proof of Theorem \ref{th1.1} for the case of $z_1<z_2<Z_1<Z_2$.

\begin{proof}[Proof of Theorem \ref{th1.1} for the case of $z_1<z_2<Z_1<Z_2$] (i) Case $B_1$: If $z_1<u(x_{l+1})=\cdots=u(x_k)=t_0\leq z_2<Z_1\leq u(x_1)=\cdots=u(x_l)=t_1<Z_2$ or  $z_2<u(x_{l+1})=\cdots=u(x_k)=t_0<Z_1\leq u(x_1)=\cdots=u(x_l)=t_1<Z_2$ or $z_1<u(x_{l+1})=\cdots=u(x_k)=t_0\leq z_2< u(x_1)=\cdots=u(x_l)=t_1<Z_1$, by the Case 1 and Case 2 of Lemma \ref{le2.7} and the strong maximum principle, we know that
\begin{equation*}\begin{array}{l}
\sum\limits_{i =1}^l {{m_i}}\leq N_2, ~~~\sum\limits_{i = l+1}^k {{m_i}}\leq N_1,
 \end{array}\end{equation*}
then
\begin{equation*}\begin{array}{l}
 \sum\limits_{i = 1}^k {{m_i}}\leq N_1+ N_2.
 \end{array}\end{equation*}

 (ii) Case $B_2$: If $ z_2<u(x_1)=u(x_2)=\cdots=u(x_k)=t<Z_1$. By the Case 3 and Case 4 of Lemma \ref{le2.7} and the strong maximum principle, we have
\begin{equation*}\begin{array}{l}
\sum\limits_{i =1}^k {{m_i}}\leq N_1+N_2.
 \end{array}\end{equation*}

(iii) Case $B_3$: For the other cases, the idea of proof is essentially same as the Case $A_2$ in section 3.1. Here we omit the proof.
\end{proof}

\section{Proof of Theorem \ref{th1.2}}
~~~~~In this section, we assume that $z_2\geq Z_1.$ We investigate the geometric structure of interior critical point sets of a solution $u$ in a planar bounded smooth multiply connected domain $\Omega$ with one interior boundary $\gamma_I$ and the external boundary $\gamma_E$, where $u$ has only $N_1$ ($N_2$) equal local maxima and $N_1$ ($N_2$) equal local minima relative to $\overline{\Omega}$ on $\gamma_I$ ($\gamma_E$), i.e., the values of all local maximal (minimum) points on the corresponding boundary are equal. We develop a new method to prove that one of the following three results holds $\sum_{i = 1}^k {{m_i}}= N_1+N_2$ or $\sum_{i = 1}^k {{m_i}}+1= N_1+N_2$ or $\sum_{i = 1}^k {{m_i}}+2= N_1+N_2$, where $N_1,N_2\geq 2$.
\begin{proof}[Proof of Theorem \ref{th1.2}] We need divide the proof into three cases.

Firstly, we should show that $u$ has at least one interior critical point in $\Omega.$ Suppose by contradiction that $|\nabla u|>0$ in $\Omega$. The strong maximum principle implies that $u$ has no interior maximum point and minimal point in $\Omega,$ then we have
 $$z_1<u(x)<Z_2 ~\mbox{for\ any}~ x\in \Omega.$$
According to the assumption of Theorem \ref{th1.2} and Remark \ref{re1.5}, let $q_1,\cdots,q_{N_2}$ and $p_1,\cdots,p_{N_2}$ be the local maximal points and local minimal points relative to $\overline{\Omega}$ on $\gamma_E$ respectively and $u(q_1)=\cdots=u(q_{N_2}),$ $u(p_1)=\cdots=u(p_{N_2})$.

Without loss of generality, we may assume that there only exist two different local maximal points $q_1,q_2$ and minimum points $p_1,p_2$ on boundary $\gamma_E.$ Note that $u$ is monotonically decreasing on the connected components of $\gamma_E$ from one maximal point to the near minimum point. Therefore, $\{x\in \Omega: u(x)=Z_2-\epsilon\}$ exactly exists two level lines in $\Omega$ for any $\epsilon$ such that $0<\epsilon<Z_2-z_2$. Here, we need consider the following two situations:

 Situation 1: If $Z_1=z_2$, this is impossible, because this would imply that either: $u(x)=\mathop {\min}\limits_{\gamma_E}\psi_2(x)=z_2~\mbox{in\ interior\ points\ of}~\Omega$, i.e., there exist two level lines of $\{x\in \Omega;u(x)=z_2\}$ connecting $\gamma_I$ from $p_1,p_2$ respectively, this contradicts with the continuity of $u$, or: there exist two level lines of $\{x\in \Omega;u(x)=t_0\in (z_2,Z_2)\}$ intersecting in $\Omega$, i.e., there exist critical points in $\Omega$ (see Fig. 8).
\begin{center}
  \includegraphics[width=12cm,height=3.2cm]{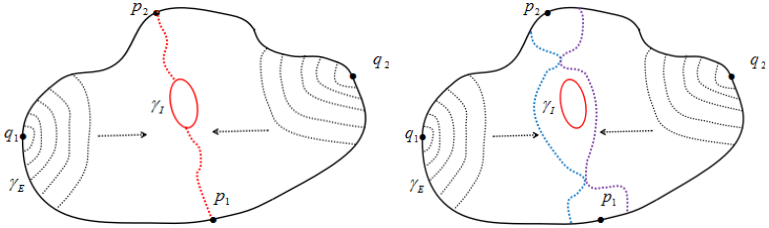}\\
  \scriptsize {\bf Fig. 8.}~~The distributions of some level lines $\{x\in A: u(x)=t\in (z_2,Z_2)\}.$
\end{center}

 Situation 2: If $Z_1<z_2$, this is also impossible. Because this would imply that there exist two level lines of $\{x\in \Omega;u(x)=t_0\in (z_2,Z_2)\}$ intersecting in $\Omega$, i.e., there exist critical points in $\Omega.$ Then $u$ has at least one interior critical point in $\Omega$.

(i) Case $C_1$: Suppose that there exists a non-simply connected component $\omega$ of $\{x\in \Omega : u(x)<t\}$ for some $t\in (z_2,Z_2)$ such that $\omega$ meets $\gamma_E$ and that the Case 3 of Lemma \ref{le2.5} does not occur for $t\in (z_1,Z_1)$. Next we need divide the proof of Case $C_1$ into four steps.

Step 1, the first ``{just right}'': we show that all critical values are equal to one of the two values. By Lemma \ref{le2.3}, we suppose that the interior critical points of $u$ are $x_1,x_2,\cdots,x_k$ and $z_1<u(x_{l+1}),\cdots,u(x_k)<Z_1\leq z_2<u(x_1),\cdots,u(x_l)<Z_2$. Next we show that $z_1<u(x_{l+1})=\cdots=u(x_k)<Z_1\leq z_2<u(x_{1})=\cdots=u(x_l)<Z_2.$ Here, we only give the proof of $u(x_{1})=\cdots=u(x_l),$ the proof of $u(x_{l+1})=\cdots=u(x_k)$ is similar.  By the assumption of Theorem \ref{th1.2}, let $q_1,\cdots,q_{N_2}$ and $p_1,\cdots,p_{N_2}$ be the equal local maximal points and minimum points on $\gamma_E,$ respectively. We set up the usual contradiction argument. Without loss of generality, we suppose that $z_2<u(x_1)<u(x_2)<Z_2$ and that $m_1,m_2$ are the respective multiplicities of $x_1,x_2.$ Then, by Lemma \ref{le2.2}, we know that any connected component of $\{x\in \Omega:u(x)>u(x_1)\}$ has to meet the boundary $\gamma_E$ and $u(p_1)<u(x_1)$, where $p_1$ is the minimal point of some one connected component of $\{x\in \Omega:u(x)<u(x_1)\}$ on $\gamma_E$. At the same time, there exists one connected component $C$ of $\{x\in \Omega:u(x_1)<u(x)<u(x_2)\}$ meeting $\gamma_E$ such that $u(x_1)<u(p_2)<u(x_2)$, where $p_2$ is the minimal point of connected component $C$ on $\gamma_E$ (see Fig. 9). Then $u(p_1)\neq u(p_2)$, which contradicts with the assumption of Theorem \ref{th1.2}. This completes the proof of step 1.
\begin{center}
  \includegraphics[width=5.5cm,height=3.7cm]{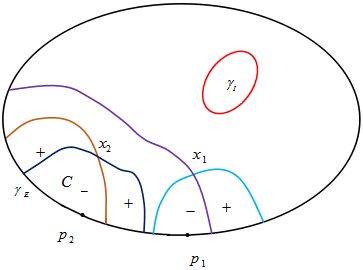}\\
  \scriptsize {\bf Fig. 9.} ~~The distributions of the connected components.
\end{center}

Step 2, the second ``{just right}'': we show that critical points together with the corresponding level lines of $\{x\in \Omega: u(x)=t~\mbox{for some}~t\in (z_2,Z_2)\}$ ($\{x\in \Omega: u(x)=t~\mbox{for some}~t\in (z_1,Z_1)\}$), which meet $\gamma_E$ ($\gamma_I$), clustering round these points exactly form one connected set. Without loss of generality, we suppose by contradiction that $x_1,x_2,\cdots,x_{l}$ together with the level lines of $\{x\in \Omega: u(x)=t>z_2\}$ clustering round these points  form two connected sets.
Therefore, there exists a connected components of $\{x\in \Omega: u(x)<t\}$, which meets two parts $\gamma_1,\gamma_2$ of $\gamma_E$, denoting by $A$ (see Fig. 10).

Note that $u$ is monotonically decreasing on the connected components of boundary $\gamma_E$ from one maximal point to the near minimum point. Therefore, $\{x\in A: u(x)=t-\epsilon\}$ exactly exists two level lines in $A$ for some $\epsilon$ such that $0<\epsilon<t-z_2$. Here, we need consider the following two situations:

 Situation 1: If $Z_1=z_2$, this is impossible, because this would imply that either: $u(x)=\mathop {\min}\limits_{\gamma_E}\psi_2(x)=z_2~\mbox{in\ interior\ points\ of}~A$, i.e., there exist two level lines of $\{x\in A;u(x)=z_2\}$ connecting $\gamma_I$ from $p_1,p_2$ respectively, this contradicts with the continuity of $u$, or: there exist two level lines of $\{x\in A;u(x)=t_0\in (z_2,t)\}$ intersecting in $A$, i.e., there exist critical points in $A$ (see Fig. 10).
\begin{center}
  \includegraphics[width=11cm,height=3.9cm]{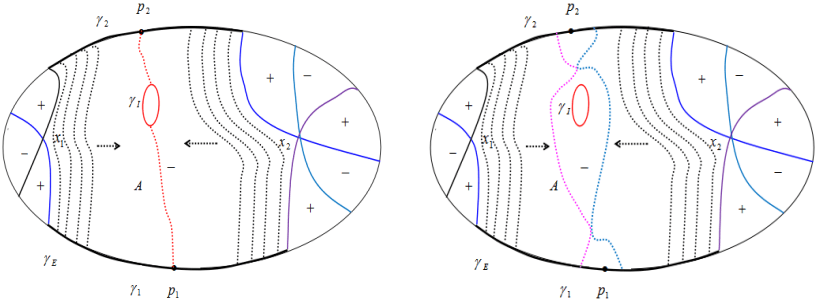}\\
  \scriptsize {\bf Fig. 10.}~~The distributions of some level lines $\{x\in A: u(x)=t-\epsilon, 0<\epsilon<t-z_2\}.$
\end{center}

 Situation 2: If $Z_1<z_2$, this is also impossible. Because this would imply that there exist two level lines of $\{x\in A;u(x)=t_0\in (z_2,t)\}$ intersecting in $A$, i.e., there exist critical points in $A.$ This completes the proof of step 2.

Step 3, the third ``{just right}'': we show that every connected component of $\{x\in \Omega: u(x)>t>z_2\}$ ($\{x\in \Omega: u(x)<t~\mbox{for}~t\in (z_1,Z_1)\}$) has exactly one global maximal (minimum) point on boundary $\gamma_E$ ($\gamma_I$). In fact, we assume that some connected component $B$ of $\{x\in \Omega: u(x)>t>z_2\}$ exists two global maximal points on boundary $\gamma_E.$ According to $u$ has $N_2$ equal local maxima and $N_2$ equal local minima on $\gamma_E$, then there must exist a minimum point $\widetilde{p}$ between the two maximal points on $\gamma_E$ such that $u(\widetilde{p})=z_2$. Since $u(x)>t>z_2$ in $B$, by the continuity of solution $u$, this contradicts with the definition of connected component $B$. This completes the proof of step 3.

Step 4, By the results of step 2 and the results of Case 2 and Case 4 in Lemma \ref{le2.5}, we have
 \begin{equation*}\begin{array}{l}
 \sharp\Big\{\mbox{the\ connected\ components\ of\ the\ super-level\ set} ~ \{x\in\Omega : u(x)>t~\mbox{for some}~t\in (z_2,Z_2)\} \Big\}\\
  ~~= \sum\limits_{i=1}^l {{m_i}}+1,
\end{array}\end{equation*}
and
\begin{equation*}\begin{array}{l}
\sharp\Big\{\mbox{the\ connected\ components\ of\ the\ sub-level\ set} ~ \{x\in\Omega : u(x)<t~\mbox{for some}~t\in (z_1,Z_1)\} \Big\}\\
~~= \sum\limits_{i = {l+1}}^{k} {{m_i}}+1.
\end{array}\end{equation*}
On the other hand, by the results of step 3 and the strong maximum principle, therefore we obtain
\begin{equation*}\begin{array}{l}
 \sum\limits_{i = 1}^l {{m_i}}+1=N_2 ~~\mbox{and}~~\sum\limits_{i = l+1}^{k} {{m_i}}+1=N_1.
\end{array}\end{equation*}
That is
\begin{equation}\label{4.1}\begin{array}{l}
\sum\limits_{i = 1}^{k}{{m_i}}+2= N_1+N_2.
\end{array}\end{equation}

(ii) Case $C_2$: Suppose that there exists a non-simply connected component $\omega$ of $\{x\in \Omega : u(x)<t\}$ for some $t\in (z_2,Z_2)$ and the external boundary $\gamma$ of $\omega$ is a simply closed curve between $\gamma_I$ and $\gamma_E$ such that $u$ has at least one critical point on $\gamma$. The idea of proof is essentially same as the proof of Case $C_1$. Next we need divide the proof of Case $C_2$ into four steps.

Step 1, the first ``just right'': According to Lemma \ref{le2.3}, we assume that $z_2<u(x_1),\cdots,u(x_l)<Z_2$. We show that $u(x_1)=\cdots=u(x_l)=t,$ i.e., we exclude the situation 1 of case $A_2$ in Theorem \ref{th1.1}. The proof is same as the step 1 of the proof of Case $C_1$.

Step 2, the second ``just right'': we show that $x_1,x_2,\cdots,x_l$ together with the corresponding level lines of $\{x\in \Omega: u(x)=t\}$ clustering round these points exactly form one connected set. The proof is same as the step 2 of the proof of Case $C_1$.

Step 3, the third ``just right'': we show that every simply connected component $\omega$ of $\{x\in \Omega: u(x)<t\}$ has exactly one minimum point on $\gamma_E,$ where $\omega$ meets the external boundary $\gamma_E.$  In fact, we assume that some simply connected component $\omega$ of $\{x\in \Omega: u(x)<t\}$ exists two minimum points on boundary $\gamma_E.$ According to $u$ has only $N_2$ local maximal points and $N_2$ local minimum points on $\gamma_E$, then there must exist a maximal point $\overline{q}$ between the two minimum points on $\gamma_E$ such that $u(\overline{q})=Z_2$. Since $u(x)<t<Z_2$ in $\omega$, by the continuity of solution $u$, this contradicts with the definition of connected component $\omega$. This completes the proof of step 3.

Step 4, By the results of step 2 and the results of step 1 of Case 1 in Lemma \ref{le2.5}, we have
 \begin{equation*}\begin{array}{l}
 \sharp\Big\{\mbox{the\ simply\ connected\ components\ $\omega$ of\ the\ sub-level\ set} ~ \{x\in\Omega : u(x)<t\}\\~~~ \mbox{such\ that\ $\omega$ meet\ the\ external\ boundary\ $\gamma_E$} \Big\}=\sum\limits_{i = 1}^l {{m_i}}.
\end{array}\end{equation*}
On the other hand, using the results of step 3 and the strong maximum principle, therefore we obtain
\begin{equation*}\begin{array}{l}
 \sum\limits_{i = 1}^l {{m_i}}=N_2.
\end{array}\end{equation*}

(iii) Case $C_3$: Suppose that there exists a simply closed curve $\gamma$ of $\{x\in \Omega : u(x)=t\}$ for $t\in (z_1,Z_1)$ between $\gamma_I$ and $\gamma_E$ such that $u$ has at least one critical point on $\gamma$ and that $z_1<u(x_{l+1}),\cdots,u(x_k)<Z_1.$ The proof is same as the proof of Case $C_2$. Then we have
\begin{equation*}\begin{array}{l}
 \sum\limits_{i = {l+1}}^k {{m_i}}=N_1.
\end{array}\end{equation*}

According to the above discussion, if Case $C_2$ and Case $C_3$ both occur, then we have
\begin{equation}\label{4.2}\begin{array}{l}
 \sum\limits_{i = {1}}^k {{m_i}}=N_1+N_2.
\end{array}\end{equation}

In addition, if Case 1 and Case 4 of Lemma \ref{le2.5} occur, or Case 2 and Case 3 of Lemma \ref{le2.5} occur, then we have
\begin{equation}\label{4.3}\begin{array}{l}
 \sum\limits_{i = {1}}^k {{m_i}}+1=N_1+N_2.
\end{array}\end{equation}
This completes the proof of Theorem \ref{th1.2}.
\end{proof}

In addition, according to Theorem \ref{th1.2} and Lemma \ref{le2.7}, we can easily have the following results.
\begin{Corollary}\label{co4.1}
  Suppose that domain $\Omega$ satisfies the hypothesis of Theorem \ref{th1.1}, $z_1<z_2<Z_1<Z_2$ and that $u$ is a non-constant solution of (\ref{1.2}). In addition, suppose that $u$ has only $N_1$ equal local maxima and $N_1$ equal local minima relative to $\overline{\Omega}$ on $\gamma_I$ and that $u$ has only $N_2$ equal local maxima and $N_2$ equal local minima relative to $\overline{\Omega}$ on $\gamma_E$. Then $u$ has finite interior critical points, and one of the following three holds
\begin{equation}\label{4.4}\begin{array}{l}
\begin{array}{l}
\sum\limits_{i = 1}^k {{m_i}}= N_1+N_2,
\end{array}
\end{array}\end{equation}
or
\begin{equation}\label{4.5}\begin{array}{l}
\begin{array}{l}
\sum\limits_{i = 1}^k {{m_i}}+1= N_1+N_2,
\end{array}
\end{array}\end{equation}
or
\begin{equation}\label{4.6}\begin{array}{l}
\begin{array}{l}
\sum\limits_{i = 1}^k {{m_i}}+2= N_1+N_2,
\end{array}
\end{array}\end{equation}
where $m_i$ is as in Theorem \ref{th1.1}.
\end{Corollary}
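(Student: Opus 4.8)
The plan is to transcribe the proof of Theorem~\ref{th1.2} to the ordering $z_1<z_2<Z_1<Z_2$, replacing the counting tool Lemma~\ref{le2.5} by its analogue Lemma~\ref{le2.7}. Finiteness of the interior critical points $x_1,\dots,x_k$ is already supplied by Lemma~\ref{le2.3}, so only the relation between $\sum_{i=1}^k m_i$ and $N_1+N_2$ has to be proved. First I would check that $u$ has at least one interior critical point exactly as in Theorem~\ref{th1.2}: if $|\nabla u|>0$ in $\Omega$, then by the strong maximum principle $z_1<u<Z_2$ in $\Omega$, and since on each boundary component of $\gamma_E$ the function $u$ runs monotonically between its $N_2$ equal maxima and $N_2$ equal minima, a slightly sub-maximal level set $\{u=Z_2-\epsilon\}$ consists of at least $N_2\ge 2$ disjoint arcs; following them inward forces either two level lines of some $\{u=t_0\}$ with $t_0\in(z_2,Z_2)$ to cross --- hence an interior critical point --- or a value coincidence contradicting the continuity of $u$ (the argument of Fig.~8 in the proof of Theorem~\ref{th1.2}).

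Next I would distinguish Situation~1 and Situation~2 of Lemma~\ref{le2.7} and, in each, establish the same three ``just right''s used for Theorem~\ref{th1.2}. The first ``just right'' --- all critical values coincide with one of (at most) two values, one in $(z_1,Z_1)$ and one in $(z_2,Z_2)$ in Situation~1, or with a single value in $(z_2,Z_1)$ in Situation~2 --- is obtained from Lemma~\ref{le2.2} together with the hypothesis that \emph{all} local maxima and \emph{all} local minima on $\gamma_I$ are equal and likewise on $\gamma_E$, which is what rules out a strictly increasing chain of distinct critical values (as in Step~1 of Theorem~\ref{th1.2}, Fig.~9). The second ``just right'' --- the critical points at a given critical value, together with the level lines of that value clustering around them, form a single connected set --- is the monotonicity-along-$\gamma_E$ (resp. $\gamma_I$) argument of Step~2 of Theorem~\ref{th1.2}, which works verbatim because a component of $\{u<t\}$ touching two boundary arcs would force an intersection of level lines in it. The third ``just right'' --- every connected component of the relevant super-level (resp. sub-level) set carries exactly one global maximum on $\gamma_E$ (resp. global minimum on $\gamma_I$) --- again follows because two such extremal points on one boundary component would sandwich an intermediate extremum of the opposite type with value $Z_2$ or $z_2$, contradicting the strict inequality defining the component.

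Given the three ``just right''s, the conclusion is bookkeeping. In Situation~1 the four cases of Lemma~\ref{le2.7} split as follows: when an interior simply closed level curve carrying critical points is present for \emph{both} levels (or a single such curve for the two levels together), (\ref{2.5})--(\ref{2.6}) of Case~1 combined with the strong maximum principle give $\sum_{i\le l}m_i=N_2$ and $\sum_{i>l}m_i=N_1$, hence $\sum m_i=N_1+N_2$; when no interior closed level curve occurs at either level, Case~2 gives $\sum_{i\le l}m_i+1=N_2$ and $\sum_{i>l}m_i+1=N_1$, hence $\sum m_i+2=N_1+N_2$; and the two mixed possibilities give $\sum m_i+1=N_1+N_2$. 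In Situation~2, all critical values equal a single $t\in(z_2,Z_1)$, and Cases~3 and~4 of Lemma~\ref{le2.7} give $M_1+M_2=2\sum m_i+q-1$ (resp. $+q+1$) with $M_1,M_2\ge\sum m_i$ (resp. $\ge\sum m_i+1$); the third ``just right'' pins down $M_1$ and $M_2$ in terms of $N_1,N_2$ and the connectedness count forces $q=1$, so again one lands on $\sum m_i=N_1+N_2$ or $\sum m_i+2=N_1+N_2$, with the mixed sub-case giving the intermediate value $\sum m_i+1=N_1+N_2$. This yields precisely (\ref{4.4})--(\ref{4.6}).

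The step I expect to be the main obstacle is the first ``just right'' in Situation~2 and, more generally, the regime $t\in(z_2,Z_1)$: here Lemma~\ref{le2.2}(4) allows a connected component of $\{u>t\}$ or of $\{u<t\}$ to attach to \emph{either} $\gamma_I$ or $\gamma_E$, so the clean dichotomy available when $z_2\ge Z_1$ is lost, and one must use the equality of all maxima and of all minima on each boundary component \emph{separately} to exclude a chain of distinct critical values and to see that each level component meets exactly one of the two boundaries. Once this topological structure is settled, the remaining computations are exactly those already carried out for Theorem~\ref{th1.2} with Lemma~\ref{le2.5} replaced by Lemma~\ref{le2.7}.
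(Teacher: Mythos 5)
Your proposal follows exactly the route the paper intends: the paper's entire ``proof'' of Corollary \ref{co4.1} is the single sentence ``according to Theorem \ref{th1.2} and Lemma \ref{le2.7}, we can easily have the following results,'' and your transcription of the three ``just right''s with Lemma \ref{le2.5} replaced by Lemma \ref{le2.7} is precisely that argument, spelled out in more detail than the paper itself provides. The obstacle you flag in the regime $t\in(z_2,Z_1)$ (where Lemma \ref{le2.2}(4) loses the boundary dichotomy) is real but is also left implicit by the authors, so your treatment is at least as complete as the paper's.
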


\begin{Remark}\label{re4.2}
 Notice that in Theorem \ref{th1.2} and Corollary \ref{co4.1}, the assumption of all the local maximal and local minimum points of $u$ on $\gamma_I$ and $\gamma_E$ are the local maximal and local minimum points relative to $\overline{\Omega}$ is necessary. If there exist some local maximal and local minimum points only relative to $\gamma_I$ and $\gamma_E$, there will be two counterexamples.

 \vspace{0.2cm}\noindent {\bf Counterexample 1.} {\it Suppose that the planar multiply connected domain $\Omega$ with the following two boundaries:
 $$\gamma_I:=\Big\{r(\theta)=R_1+\sin (N_1\theta)\Big\},~~\gamma_E:=\Big\{r(\theta)=R_2+\sin (N_2\theta)\Big\},$$
 where we describe the curves $\gamma_I$ and $\gamma_E$ by using polar coordinates $(r,\theta)$, $R_1$, $R_2>1$ and $N_1,N_2$ are integers bigger than 1.
 We assume that $R_2 > R_1 + 2$, then $\gamma_I$ and $\gamma_E$ are disjoint. Let
$$u(x_1,x_2):=\log\sqrt{{x_1}^2+{x_2}^2},$$
which is obviously a harmonic function in $\Omega$. Note that $z_2:=\min_{\gamma_E}u = \log\sqrt{R_2-1}> Z_1:=\max_{\gamma_I}u=\log\sqrt{R_1+1}$, which is a condition demanded in Theorem \ref{th1.2}. Moreover, $u|_{\gamma_I}$ and $u|_{\gamma_E}$ has only $N_1$ equal maxima and $N_2$ equal minima respectively, which are not the local maxima and minima relative to $\overline{\Omega}$. However, $u$ has no critical points in $\Omega.$}\vspace{0.2cm}

\vspace{0.0cm}\noindent {\bf Counterexample 2.} {\it Suppose that the planar multiply connected domain $\Omega$ with the following two boundaries:
 $$\gamma_I:=\Big\{r(\theta)=R_1+\sin (N\theta)\Big\},~~\gamma_E:=\Big\{r(\theta)=R_2+\sin (N\theta)\Big\},$$
 where $R_2>R_1>1$ such that $R_2-R_1<2$ and $N$ is an integer bigger than 1. Then $\gamma_I$ and $\gamma_E$ are disjoint. Let
$$u(x_1,x_2):=\log\sqrt{{x_1}^2+{x_2}^2},$$
Note that $z_1<z_2<Z_1<Z_2$, which is a condition demanded in Corollary \ref{co4.1}. Moreover, $u|_{\gamma_I}$ and $u|_{\gamma_E}$ has only $N$ equal maxima and $N$ equal minima respectively, which are not the local maxima and minima relative to $\overline{\Omega}$. However, $u$ has no critical points in $\Omega.$}\vspace{0.3cm}
\end{Remark}

\section{The geometric structure of interior critical zero points}
 ~~~~~~In this section, we will study the geometric structure of interior critical zero points of solutions $u$ to linear elliptic equations with nonhomogeneous Dirichlet boundary conditions in a multiply connected domain $\Omega$.

\subsection{Proof of Theorem \ref{th1.3}}
~~~~~In this subsection, we will investigate the geometric structure of interior critical zero points of a solution $u$ in a planar, bounded, smooth and multiply connected domain $\Omega$ with $u|_{\gamma_I}=H,u|_{\gamma_E}=\psi(x)$ and $\psi$ is sign-changing and has $\widetilde{N}$ zero points on $\gamma_E.$

\begin{proof}[Proof of Theorem \ref{th1.3}]  We need divide the proof into two cases.

Firstly, we should show that $u$ has finite critical zero points in $\Omega,$ denoting by $x_1,\cdots,x_l$, and we set $m_i$ as the multiplicity of corresponding critical zero point $x_i~(i=1,\cdots,l)$. Suppose by contradiction that $u$ has infinite critical zero points in $\Omega.$  According to the results of Lemma 3.1 and Lemma 4.1 in \cite{Deng3}, we know that every non-closed zero level line and $\gamma_E$ have at least one intersection point. In addition, the theorem of Hartman and Wintner \cite{Hartman} shows that the interior critical points of $u$ are isolated. Then there exist infinite non-closed zero level lines across these critical zero points and there are infinite zero points on $\gamma_E$, this contradicts with the assumption of Theorem \ref{th1.3}.

(i) Case $D_1$: If all the critical zero points $x_1,\cdots,x_l$ together with the corresponding zero level lines of $\{x\in\Omega: u(x)=0\}$ clustering round these points form one connected set $\mathrm{C}$. Next we need divide the proof of Case $D_1$ into two situations.

(1) Situation 1: If there exists a simply closed curve $\gamma$ of $\{x\in \Omega : u(x)=0\}$ between $\gamma_I$ and $\gamma_E$ such that $u$ has at least one critical point on $\gamma$, that is $H\neq 0$. Then there are just $\sum_{i = 1}^l {{m_i}}$ non-closed zero level lines across these critical zero points. In addition, when $H\neq 0,$ we know that every non-closed zero level line and $\gamma_E$ must have two intersection points. Then we have
 \begin{equation*}\begin{array}{l}
2( \sum\limits_{i = 1}^l {m_i})\leq \widetilde{N},
\end{array}\end{equation*}
that is
 \begin{equation}\label{5.1}\begin{array}{l}
\sum\limits_{i = 1}^l {m_i}\leq \frac{\widetilde{N}}{2}.
\end{array}\end{equation}

(2) Situation 2: Suppose that there does not exist a simply closed curve $\gamma$ of $\{x\in \Omega : u(x)=0\}$ between $\gamma_I$ and $\gamma_E$ such that $u$ has at least one critical point on $\gamma$. Then there are just $(\sum_{i = 1}^l {{m_i}}+1)$ non-closed zero level lines across these critical zero points.

When $H\neq 0$, we know that every non-closed zero level line and $\gamma_E$ must have two intersection points. Then we have
 \begin{equation*}\begin{array}{l}
2( \sum\limits_{i = 1}^l {m_i}+1)\leq \widetilde{N},
\end{array}\end{equation*}
that is
 \begin{equation}\label{5.2}\begin{array}{l}
\sum\limits_{i = 1}^l {m_i}\leq \frac{\widetilde{N}}{2}-1.
\end{array}\end{equation}

When $H=0,$ by strong maximum principle, we know that above connected set $\mathrm{C}$ and $\gamma_I$ have at most one intersection point. If above connected set $\mathrm{C}$ and $\gamma_I$ do not have intersection points, then every non-closed zero level line and $\gamma_E$ must have two intersection points and (\ref{5.2}) holds. If above connected set $\mathrm{C}$ and $\gamma_I$ have one intersection point, by the results of Lemma 4.1 in \cite{Deng3}, then there must be a independent zero level line connecting $\gamma_I$ and $\gamma_E$, and (\ref{5.2}) still hold.

(ii) Case $D_2$: If all the critical zero points $x_1,\cdots,x_l$ together with the corresponding zero level lines of $\{x\in\Omega: u(x)=0\}$ clustering round these points form $q~(q\geq 2)$ connected sets. Then there are just $(\sum_{i = 1}^l {{m_i}}+q)$ non-closed zero level lines across these critical zero points. Next we need divide the proof of Case $D_2$ into two situations.

(3) Situation 3: When $H\neq 0$, we know that each non-closed zero level line and $\gamma_E$ must have two intersection points. Then we have
 \begin{equation*}\begin{array}{l}
2( \sum\limits_{i = 1}^l {m_i}+q)\leq \widetilde{N},
\end{array}\end{equation*}
that is
 \begin{equation*}\begin{array}{l}
\sum\limits_{i = 1}^l {m_i}\leq \frac{\widetilde{N}}{2}-q.
\end{array}\end{equation*}

(4) Situation 4: When $H=0$, according to the strong maximum principle, we know that each connected set and $\gamma_I$ have at most one intersection point. So we know that the number of intersection point of $q$ connected sets and $\gamma_E$ is at least $2( \sum\limits_{i = 1}^l {m_i}+q)-q.$  Then we have
\begin{equation*}\begin{array}{l}
2( \sum\limits_{i = 1}^l {m_i}+q)-q\leq \widetilde{N},
\end{array}\end{equation*}
that is
 \begin{equation*}\begin{array}{l}
\sum\limits_{i = 1}^l {m_i}\leq \frac{\widetilde{N}-q}{2}.
\end{array}\end{equation*}
Therefore, we have:\\
If $H\neq 0,$ we have
\begin{equation*}\begin{array}{l}
\begin{array}{l}
\sum\limits_{i = 1}^l {{m_i}}\leq \frac{\widetilde{N}}{2},
\end{array}
\end{array}\end{equation*}
else $H=0,$ we have
\begin{equation*}\begin{array}{l}
\begin{array}{l}
\sum\limits_{i = 1}^l {{m_i}}\leq \frac{\widetilde{N}}{2}-1.
\end{array}
\end{array}\end{equation*}
\end{proof}

\subsection{Proof of Theorem \ref{th1.4}}
~~~~~~In this subsection, we will investigate the geometric structure of interior critical zero points of a solution $u$ in a planar, bounded, smooth and multiply connected domain $\Omega$ with $u|_{\gamma_I}=\psi_1(x),u|_{\gamma_E}=\psi_2(x)$, $\psi_1(x)$ and $\psi_2(x)$ are sign-changing and has $\widetilde{N}_1$ zero points and $\widetilde{N}_2$ zero points on $\gamma_I$ and $\gamma_E$ respectively.

\begin{proof}[Proof of Theorem \ref{th1.4}] The proof of finiteness of critical zero points is same as Theorem \ref{th1.3}, denoting by $x_1,\cdots,x_l,$ and we set $m_i$ as the multiplicity of corresponding critical zero point $x_i$. Next we need divide the proof into two cases.

(i) Case $E_1$: If all the critical zero points $x_1,\cdots,x_l$ together with the corresponding zero level lines of $\{x\in\Omega: u(x)=0\}$ clustering round these points form one connected set and there exists a simply closed curve $\gamma$ of $\{x\in \Omega : u(x)=0\}$ between $\gamma_I$ and $\gamma_E$ such that $u$ has at least two critical points on $\gamma$. Then there are just $\sum_{i = 1}^l {{m_i}}$ non-closed zero level lines across these critical zero points. In addition, by the results of Lemma \ref{le2.2}, we know that every non-closed zero level line and boundaries must have two intersection points. Then we have
 \begin{equation*}\begin{array}{l}
2( \sum\limits_{i = 1}^l {m_i})\leq \widetilde{N_1}+\widetilde{N_2},
\end{array}\end{equation*}
that is
 \begin{equation*}\begin{array}{l}
\sum\limits_{i = 1}^l {m_i}\leq \frac{\widetilde{N_1}+\widetilde{N_2}}{2}.
\end{array}\end{equation*}

(ii) Case $E_2$: If Case $E_1$ does not occur. Then there are at least $(\sum_{i = 1}^l {{m_i}}+1)$ non-closed zero level lines across these critical zero points. According to the analysis of above Case $E_1$, then we have
 \begin{equation*}\begin{array}{l}
2( \sum\limits_{i = 1}^l {m_i}+1)\leq \widetilde{N_1}+\widetilde{N_2},
\end{array}\end{equation*}
that is
 \begin{equation*}\begin{array}{l}
\sum\limits_{i = 1}^l {m_i}\leq \frac{\widetilde{N_1}+\widetilde{N_2}}{2}-1.
\end{array}\end{equation*}
\end{proof}

According to Theorem \ref{th1.3} and Theorem \ref{th1.4}, we can easily have the following result.

\begin{Remark}\label{re5.1}
 Let $\Omega$ be a bounded, smooth and simply connected domain in $\mathbb{R}^{2}$. Suppose that $\psi(x)\in C^1(\overline{\Omega})$ is sign-changing and that $\psi$ has $\widetilde{N}$ zero points on $\partial\Omega.$ Let $u$ be a non-constant solution of the following boundary value problem
\begin{equation}\label{5.3}\begin{array}{l}
\left\{
\begin{array}{l}
\sum\limits_{i,j=1}^{2}a_{ij}(x)u_{x_ix_j}(x)+\sum\limits_{i=1}^{2}b_{i}(x)u_{x_i}(x)=0~~~\rm{in}~\Omega,\\
u=\psi(x)~~~\rm{on}~ \partial\Omega.
\end{array}
\right.
\end{array}\end{equation}
Then $u$ has finite interior critical zero points, denoting by $x_1,\cdots,x_l,$ and
\begin{equation}\label{5.4}\begin{array}{l}
\begin{array}{l}
\sum\limits_{i = 1}^l {{m_i}}\leq \frac{\widetilde{N}}{2}-1,
\end{array}
\end{array}\end{equation}
where $m_i$ is as in Theorem \ref{th1.3}.
\end{Remark}


\begin{thebibliography}{99}\addtolength{\itemsep}{-1.5ex}\footnotesize
\bibitem{Alberti} G.S. Alberti, G. Bal, M. Di Cristo, Critical points for elliptic equations with prescribed boundary conditions, Arch. Ration. Mech. Anal. 226 (2017), no. 1, 117-141.
\bibitem{Alessandrini} G. Alessandrini, Critical points of solutions of elliptic equations in two variables, Ann. Scuola Norm. Sup. Pisa Cl. Sci. (4) 14 (1987)  229-256.
\bibitem{AlessandriniMagnanini1} G. Alessandrini, R. Magnanini, The index of isolated critical points and solutions of elliptic equations in the plane, Ann. Scuola Norm. Sup. Pisa Cl. Sci. 19 (4) (1992) 567-589.
\bibitem{Alessandrini1} G. Alessandrini, D. Lupo, E. Rosset, Local behavior and geometric properties of solutions to degenerate quasilinear elliptic equations in the plane, Appl. Anal. 50 (1993), no. 3-4, 191-215.
\bibitem{AlessandriniMagnanini2} G. Alessandrini, R. Magnanini, Elliptic equations in divergence form, geometric critical points of solutions, and Stekloff eigenfunctions, SIAM J. Math. Anal. 25 (1994) 1259-1268.
\bibitem{Arango} J. Arango, A. G\'{o}mez, Critical points of solutions to quasilinear elliptic problems, Nonlinear Anal. 75 (2012) 4375-4381.
\bibitem{CabreChanillo} X. Cabr\'{e}, S. Chanillo, Stable solutions of semilinear elliptic problems in convex domains, Selecta Math. (N.S.) 4 (1998) 1-10.
\bibitem{Cecchini} S. Cecchini, R. Magnanini, Critical points of solutions of degenerate elliptic equations in the plane, Calc. Var. Partial Differential Equations 39 (2010) 121-138.
\bibitem{Cheeger} J. Cheeger, A. Naber, D. Valtorta, Critical sets of elliptic equations, Comm. Pure Appl. Math. 68 (2015), no. 2, 173-209.
\bibitem{ChenHuang} J.T. Chen, W.H. Huang, Convexity of capillary surfaces in the outer space, Invent. Math. 67 (1982) 253-259.
\bibitem{Deng1} H.Y. Deng, H.R. Liu, L. Tian, Critical points of solutions for mean curvature equation in strictly convex and nonconvex domains, arXiv preprint arXiv:1712.08431 (2017).
\bibitem{Deng2} H.Y. Deng, H.R. Liu, L. Tian, Uniqueness of critical points of solutions to the mean curvature equation with Neumann and Robin boundary conditions, arXiv preprint arXiv:1712.08454 (2017).
\bibitem{Deng3} H.Y. Deng, H.R. Liu, L. Tian, Critical points of solutions to a quasilinear elliptic equation with nonhomogeneous Dirichlet boundary conditions, J. Differential Equations 265 (2018), no. 9, 4133-4157.
\bibitem{Donnelly1} H. Donnelly, C. Fefferman, Nodal sets of eigenfunctions on Riemannian manifolds, Invent. Math. 93 (1988), 161-183.
\bibitem{Donnelly2} H. Donnelly, C. Fefferman, Nodal sets for eigenfunctions of the Laplacian on surfaces, J. Amer. Math. Soc. 31 (1990), 333-353.
\bibitem{Enciso1} A. Enciso, D. Peralta-Salas, Critical points and level sets in exterior boundary problems, Indiana Univ. Math. J. 58 (2009) 1947-1968.
\bibitem{Enciso2} A. Enciso, D. Peralta-Salas, Critical points of Green's functions on complete manifolds, J. Differential Geom. 92 (2012) 1-29.
\bibitem{Han} Q. Han, R. Hardt, F.-H. Lin, Geometric measure of singular sets of elliptic equations, Comm. Pure Appl. Math. 51 (1998), no. 11-12, 1425-1443.
\bibitem{Hardt} R. Hardt, M. Hoffmann-Ostenhof, T. Hoffmann-Ostenhof, N. Nadirashvili, Critical sets of solutions to elliptic equations, J. Differential Geom. 51 (1999), no. 2, 359-373.
\bibitem{Hartman} P. Hartman, A. Wintner, On the local behavior of solutions of non-parabolic partial differential equations, Amer. J. Math. 75, (1953) 449-476.
\bibitem{Kraus} D. Kraus, Critical sets of bounded analytic functions, zero sets of Bergman spaces and nonpositive curvature, Proc. Lond. Math. Soc. 106 (3) (2013) 931-956.
\bibitem{Lin} F.H. Lin, Nodal sets of solutions of elliptic and parabolic equations, Comm. Pure Appl. Math. 44 (1991), no. 3, 287-308.
\bibitem{Logunov1} A. Logunov, Nodal sets of Laplace eigenfunctions: polynomial upper estimates of the Hausdorff measure, Ann. of Math. (2) 187 (2018), no. 1, 221-239.
\bibitem{Logunov2} A. Logunov, Nodal sets of Laplace eigenfunctions: proof of Nadirashvili's conjecture and of the lower bound in Yau's conjecture, Ann. of Math. (2) 187 (2018), no. 1, 241-262.
\bibitem{Logunov3} A. Logunov, E. Malinnikova, Nodal sets of Laplace eigenfunctions: estimates of the Hausdorff measure in dimension two and three, Operator Theory: Advances and Applications, (2018) Vol. 261, 333-344.
\bibitem{Naber} A. Naber, D. Valtorta, Volume estimates on the critical sets of solutions to elliptic PDEs, Comm. Pure Appl. Math. 70 (2017), no. 10, 1835-1897.
\bibitem{Sakaguchi1} S. Sakaguchi, Uniqueness of critical point of the solution to some semilinear elliptic boundary value problem in $\mathbb{R}^2,$ Trans. Amer. Math. Soc. 319 (1990) 179-190.
\bibitem{Sakaguchi2} S. Sakaguchi, Critical points of solutions to the obstacle problem in the plane, Ann. Scuola Norm. Sup. Pisa Cl. Sci. 21 (4) (1994) 157-173.
\bibitem{Tian1} L. Tian, X.P. Yang, Measure estimates of nodal sets of bi-harmonic functions, J. Differential Equations 256 (2014), no. 2, 558-576.
\bibitem{Tian2} L. Tian, X.P. Yang, Measure upper bounds for nodal sets of eigenfunctions of the bi-harmonic operator, arXiv preprint arXiv: 1709.00153 (2017).
\end{thebibliography}
\end{document}